\documentclass{article}
\usepackage[T1]{fontenc}
\usepackage[english]{babel}
\usepackage[margin=1.in]{geometry}
\usepackage[utf8]{inputenc}
\usepackage{lmodern}
\usepackage{microtype}
\usepackage{dsfont}

\usepackage{amsmath,amssymb,amsfonts,amsthm}
\usepackage{mathtools}
\usepackage{mathrsfs} 
\usepackage[square,comma,numbers]{natbib}
\usepackage[colorlinks=true, allcolors=blue]{hyperref}
\usepackage{graphicx}
\usepackage{enumerate}
\usepackage[font=small]{caption}
\usepackage{subcaption}
\usepackage[dvipsnames]{xcolor}
\usepackage{tikz}

\numberwithin{equation}{section}

\def\cE{{\mathcal E}}
\def\cF{{\mathcal F}}

\def\cL{{\mathcal L}}

\def\cP{{\mathcal P}}

\def\cQ{{\mathcal Q}}

\def\E{\mathbb{E}}
\def\F{\mathbb{F}}

\def\P{\mathbb{P}}
\def\R{\mathbb{R}}
\def\U{\mathbb{U}}

\def\T{\mathbb{T}}
\def\Z{\mathbb{Z}}

\def\sA{\mathscr{A}}

\def\sF{\mathscr{F}}

\def\Lip{\mathrm{Lip}}

\renewcommand{\d}{\mathrm{d}}
\newcommand{\dr}{\mathrm{d}r}
\newcommand{\ds}{\mathrm{d}s}
\newcommand{\dt}{\mathrm{d}t}

\newcommand{\dx}{\mathrm{d}x}
\newcommand{\dy}{\mathrm{d}y}

\newcommand{\eps}{\epsilon}
\newcommand{\Ent}{\mathrm{Ent}}
\newcommand{\BL}{\mathrm{BL}}

\newtheorem*{Def*}{Definition}
\newtheorem*{Thm*}{Theorem}
\newtheorem*{Cor*}{Corollary}
\newtheorem*{Rmk*}{Remark}
\newtheorem*{Lem*}{Lemma}
\newtheorem*{Prop*}{Proposition}
\newtheorem*{Asm*}{Assumption}

\newtheorem{Def}{Definition}[section]
\newtheorem{Thm}[Def]{Theorem}
\newtheorem{Cor}[Def]{Corollary}

\newtheorem{Lem}[Def]{Lemma}
\newtheorem{Prop}[Def]{Proposition}
\newtheorem{Asm}[Def]{Assumption}

\def\be{\begin{equation}}
\def\ee{\end{equation}}

\usepackage{fancyhdr}
\title{Convergence of Potential Mean-Field Games \\via Lyapunov Methods\footnote{Research partially supported 
by the National Science Foundation grant DMS 2406762. The author is grateful to H.~Mete Soner and Kevin Zhang for helpful comments and discussions. ChatGPT provided useful editorial assistance.}}
\author
{Felix H\"ofer\footnote{Department of Operations Research and Financial
Engineering, Princeton University, Princeton, NJ, 08540, USA}}

\date{}

\begin{document}
\maketitle

\begin{abstract}
\noindent 
We consider discounted infinite-horizon potential mean-field games (MFGs) on the $d$-dimensional torus. Without imposing monotonicity assumptions, we prove that every weak limit point of a time-dependent equilibrium, as time tends to infinity, is a stationary equilibrium. As a consequence, equilibria converge whenever the stationary solution is unique. The short proof is based on a novel Lyapunov functional for the time-dependent MFG system. We also provide a new uniqueness criterion for stationary equilibria. Finally, we apply our results to the subcritical Kuramoto MFG studied by Carmona, Cormier, and Soner, showing that every equilibrium converges to the incoherent solution.
\end{abstract}

\section{Introduction}

The theory of mean-field games (MFGs), initiated by Lasry and Lions \cite{LL1} and Huang, Caines, and Malham\'{e} \cite{HMC1}, provides powerful techniques for studying symmetric, non-cooperative large-population games. We refer to the monographs by Carmona and Delarue \cite{carmona2018probabilistic} and Ba{\c{s}}ar, Djehiche, and Tembine \cite{bacsar2026mean} for comprehensive introductions. In this work, we investigate the long-time behavior of time-dependent equilibria and the uniqueness of stationary equilibria for discounted infinite-horizon MFGs. Questions of this type are central both in macroeconomic theory \cite{achdou2014partial, achdou2022income} and in the broader analysis of general, possibly non-monotone, mean-field games. Despite considerable recent interest in non-monotone MFGs, these questions remain largely open for the problem considered here. We provide some answers in the \emph{potential case}, that is, for MFGs that admit an associated central-planner mean-field control (MFC) problem whose minimizers are MFG equilibria, see H\"{o}fer and Soner \cite{hofer2024optimal}.

Let us briefly introduce the problem. Consider a filtered probability space $(\Omega,\sF,\F=(\sF_t)_{t\ge0},\P)$ carrying a $d$-dimensional $\F$-Brownian motion $(W_t)_{t\ge0}$, and let $\sA$ denote the set of all $\F$-progressively measurable $d$-dimensional processes. Let $\rho>0$ be a discount factor and $\nu>0$. Given a measure flow $\boldsymbol{\mu}=(\mu_t)_{t\ge0}$ on the torus $\T^d=(\R/2\pi\Z)^d$, a typical player solves the stochastic control problem (SCP):
\begin{equation}\label{eq:typical-player-problem}\tag{SCP$(\boldsymbol{\mu})$}
\left\{
\begin{split}
\inf_{\alpha\in\sA}\qquad &\E\Big[\int_0^\infty e^{-\rho t}\Big(\frac12 |\alpha_t|^2 + f(X^\alpha_t,\mu_t)\Big)\,\dt \Big]\\
\text{s.t.}\qquad &\d X^\alpha_t = \alpha_t\,\dt + \sqrt{2\nu}\,\d W_t,\qquad X^\alpha_0\sim\mu_0,
\end{split}
\right.
\end{equation}
where we assume that the stochastic basis is rich enough to support an $\sF_0$-measurable random variable with law $\mu_0$. The state equation is regarded modulo $2\pi$, so that the state space is $\T^d$. A \emph{solution} or \emph{equilibrium of the MFG starting from a distribution $\mu\in\cP(\T^d)$} is a measure flow $\boldsymbol{\mu}=(\mu_t)_{t\ge0}$ with $\mu_0=\mu$ such that there exists an optimal control $\alpha^*\in\sA$ in \eqref{eq:typical-player-problem} satisfying $\cL(X^{\alpha^*}_t)=\mu_t$ for all $t\ge0$. A \emph{stationary solution}, or \emph{stationary equilibrium of the MFG}, is a measure $\mu\in\cP(\T^d)$ such that the constant flow $\mu_t\equiv\mu$ is a solution of the MFG starting in $\mu$. We ask the questions:
\begin{equation}\label{main-question-i}
\text{How do time-dependent equilibria $\boldsymbol{\mu}=(\mu_t)_{t\ge0}$ behave as $t\uparrow\infty$?}
\vspace{0.5em}
\end{equation}
and 
\begin{equation}\label{main-question-ii}
\text{When is a stationary equilibrium unique?}
\vspace{0.5em}
\end{equation}
Whenever there is a unique stationary solution, it is natural to expect time-dependent equilibria to converge to it, and we confirm this hypothesis in our setting. Our approach relies on identifying a suitable Lyapunov functional of the time-inhomogeneous MFG system, and we hope this technique will prove useful in other contexts. We outline the main ideas and state our main results in Theorems \ref{thm:main} and \ref{thm:main-stationary} below. 

Under Assumption \ref{asm:coupling} on the coupling cost $f$ below, a measure flow $\boldsymbol{\mu}$ is a solution to the MFG if and only if there exists a corresponding solution to the associated \emph{MFG system}, consisting of a backward Hamilton-Jacobi-Bellman (HJB) equation coupled with a forward Fokker-Planck-Kolmogorov equation on $(0,\infty)\times\T^d$:
\begin{equation}\label{eq:inhomog-MFG-system}\tag{MFG$_\rho$}
\left\{
\begin{aligned}
&-\partial_t u +\rho u - \nu \Delta u + \frac12 |\nabla u|^2 = f(x,\mu_t),\\
& \partial_t \mu -\nu \Delta \mu - \nabla\cdot (\mu\nabla u)=0,\qquad \mu_0\ \text{given},
\end{aligned}
\right.
\end{equation}
where the unknowns are functions $u:[0,\infty)\times\T^d\mapsto\R$ and $\mu:[0,\infty)\mapsto\cP(\T^d)$. The HJB equation is understood in the viscosity sense and the Fokker-Planck-Kolmogorov equation in the weak sense. However, for positive times, all solutions in this paper are classical under our Assumption \ref{asm:coupling} below, and we identify $\mu_t(\dx) =m(t,x)\,\dx$ with its density for $t>0$. We write $\cP^2_+(\T^d)$  for the set of all twice continuously differentiable, strictly positive densities. Stationary solutions $\mu$ are in one-to-one correspondence with solutions to the \emph{stationary MFG system} on $\T^d$:
\begin{equation}\label{eq:stationary-MFG-system}\tag{MFG$_{\rho}^\infty$}
\left\{
\begin{aligned}
&\rho u - \nu \Delta u + \frac12 |\nabla u|^2 = f(x,\mu), \\
&-\nu \Delta  m - \nabla\cdot (m\nabla u)=0,
\end{aligned}
\right.
\end{equation}
where $\mu(\dx)=m(x)\,\dx$, and the unknowns are $(u,m):\T^d\mapsto\R\times[0,\infty)$. With a slight abuse of terminology, we will interchangeably refer to a measure flow $(\mu_t)_{t\ge0}$, its flow of densities $(m_t)_{t>0}$, or the corresponding solution $(u_t,\mu_t)_{t\ge0}$ or $(u_t,m_t)_{t>0}$ as an \emph{equilibrium} or \emph{solution of the MFG}, with a similar convention for the stationary case.

Our main assumption is that the interaction cost $f$ is a linear derivative $f=\delta_\mu \cF$ of some potential $\cF:\cP(\T^d)\mapsto\R$. In this case, we introduce a (generalized) \emph{free energy}
\begin{equation}\label{eq:free-energy}
\Phi(\mu) :=\rho\, \nu \, \Ent(\mu)+  \frac{\nu^2}{2}I(\mu) + \cF(\mu),\qquad \mu\in\cP^2_+(\T^d).
\end{equation}
Here, for $\mu(\dx)=m(x)\,\dx$, 
$$
\Ent(\mu):=\int_{\T^d} \log (m(x)) \, m(x)\,\dx,\qquad I(\mu):= \int_{\T^d} \frac{|\nabla m(x)|^2}{m(x)}\,\dx
$$
denote the \emph{entropy} and the \emph{Fisher information}, respectively. When $\rho=0$, the map $\Phi$ reduces to a Fisher-information-regularized, or \emph{Ginzburg–Landau}, energy, see \cite{claisse2026mean}. A direct computation (see Lemma \ref{lem:Phi-derivative}) shows that $\mu(\dx)=m(x)\,\dx$ is a stationary equilibrium if and only if it is a stationary point of $\Phi$, in the sense that its linear derivative $\delta_\mu\Phi(\mu,\cdot)$ is constant. Next, given a time-dependent equilibrium $(u,m)$, the main idea is to introduce a new variable
$$
q(t,x)=u(t,x)+\nu\log(m(t,x)),\qquad (t,x)\in(0,\infty)\times\T^d,
$$
and consider the functional 
\begin{equation*}
\cL(t):=\Phi(\mu_t) -  \frac12 \cQ(t),\qquad \cQ(t):=\int_{\T^d} |\nabla q(t,x)|^2m(t,x)\,\d x.
\end{equation*}
It turns out that $\cL$ acts as a Lyapunov functional for the system \eqref{eq:inhomog-MFG-system} and satisfies $\dot\cL=-\rho\cQ\leq0$, see Proposition \ref{prop:Lyapunov}.

\begin{Thm}[Main result]\label{thm:main}
Let Assumption \ref{asm:coupling} below be in force, and let $\boldsymbol{\mu}=(\mu_t)_{t\ge0}$ be any time-dependent equilibrium. Then:
\begin{enumerate}[(i)]
\item\label{eq:main-result-i} For any $T>0$, $\lim_{t\uparrow\infty}\sup_{s\in[0,T]}\d_\BL(\mu_t,\mu_{t+s})=0$.
\item\label{eq:main-result-ii} Any weak limit point of $\boldsymbol{\mu}$ as $t\uparrow\infty$ is a stationary equilibrium. In particular, if there exists a unique stationary equilibrium $\mu^\star\in\cP(\T^d)$, then $\boldsymbol{\mu}$ converges to $\mu^\star$ as $t\uparrow\infty$.
\end{enumerate}
Here, convergence is understood with respect to weak convergence of measures, and $\d_\BL$ denotes the bounded Lipschitz metric on $\cP(\T^d)$.
\end{Thm}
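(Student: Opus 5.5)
The plan is to integrate the Lyapunov identity $\dot\cL=-\rho\cQ$ of Proposition \ref{prop:Lyapunov} over $[t_0,\infty)$ for a fixed $t_0>0$. If $\cL$ is bounded below along the equilibrium, this gives
\[
\rho\int_{t_0}^\infty \cQ(t)\,\dt\le \cL(t_0)-\inf_{t\ge t_0}\cL(t)<\infty .
\]
The lower bound on $\cL$ is the step I expect to be the main obstacle, because $\cL=\Phi-\frac12\cQ$ contains $-\frac12\cQ$ with a bad sign. I would try to control $\cQ$ using the structure of the system. One route is to note that $\nabla q=\nabla u+\nu\nabla\log m$ is, up to sign, the velocity of the Fokker--Planck equation, since $\partial_t m=\nabla\cdot(m\nabla q)$. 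The key step is then a uniform-in-time bound on $\nabla u$ and on $\nabla\log m$ for $t\ge t_0$. For the HJB equation with discount $\rho>0$ and bounded, Lipschitz $f$ (presumably part of Assumption \ref{asm:coupling}), standard Bernstein or semiconcavity estimates should give $\sup_{t\ge 1}\|\nabla u_t\|_\infty<\infty$. Parabolic Harnack and gradient estimates for the Fokker--Planck equation with bounded drift should then give $\sup_{t\ge1}\|\nabla\log m_t\|_\infty<\infty$. Together these bound $\cQ$, $\Ent$, $I$ and $\cF$ uniformly for $t\ge1$, so $\cL$ is bounded and $\cQ\in L^1(1,\infty)$. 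If these estimates were unavailable, I would instead try to show $\cL(t)\ge\inf\Phi-C$ through an alternative expression for $\cL$ obtained from the MFG system. I will simply assume such uniform bounds.

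For part (i), I would use the bounded Lipschitz duality. For a test function $\varphi$ with $\|\varphi\|_{\BL}\le1$ and $s\in[0,T]$, the weak form of the Fokker--Planck equation in divergence form gives
\[
\Big|\int\varphi\,\d\mu_{t+s}-\int\varphi\,\d\mu_t\Big|
=\Big|\int_t^{t+s}\!\!\int \nabla\varphi\cdot\nabla q\, m\,\dx\,\dr\Big|
\le \int_t^{t+T}\cQ(r)^{1/2}\,\dr\le \sqrt{T}\Big(\int_t^{\infty}\cQ\Big)^{1/2}.
\]
The last step is Cauchy--Schwarz, first in space (using $\int m=1$) and then in time. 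The right-hand side tends to $0$ as $t\uparrow\infty$ because $\cQ\in L^1$, which proves (i).

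For part (ii), take $t_n\uparrow\infty$ with $\mu_{t_n}\rightharpoonup\bar\mu$ and consider the shifted equilibria $(u_{t_n+\cdot},m_{t_n+\cdot})$ on $[0,T]$. By the uniform estimates above and parabolic regularity, these are precompact in $C^{1,2}_{loc}$, and any subsequential limit $(\bar u,\bar m)$ solves \eqref{eq:inhomog-MFG-system} on $(0,T)$; here continuity of $f$ in $\mu$ is used. Part (i) shows $\bar\mu_s=\bar\mu$ for all $s$. Moreover $\int_0^T\cQ(t_n+s)\,\ds\to0$, so $\nabla\bar q\equiv0$, that is, $\bar u+\nu\log\bar m$ is constant in $x$. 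The Fokker--Planck equation is then automatically stationary. The HJB equation with the constant measure $\bar\mu$ reads $-\partial_t\bar u+\rho\bar u-\nu\Delta\bar u+\frac12|\nabla\bar u|^2=f(x,\bar\mu)$. Since $\bar u=-\nu\log\bar m+c(t)$ with $\bar m$ time-independent, $\partial_t\bar u=c'(t)$ is spatially constant. The bounded solution of this infinite-horizon equation must therefore have $\bar u-\bar u^\infty$ constant in time, where $\bar u^\infty$ solves the stationary HJB equation; this uses the uniqueness of bounded solutions when $\rho>0$. Hence $(\bar u^\infty,\bar m)$ solves \eqref{eq:stationary-MFG-system}, so $\bar\mu$ is a stationary equilibrium. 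Alternatively, I could verify that $\delta_\mu\Phi(\bar\mu,\cdot)$ is constant and invoke Lemma \ref{lem:Phi-derivative}. Finally, $\cP(\T^d)$ is weakly compact, so if the stationary equilibrium $\mu^\star$ is unique, every subsequence of $\boldsymbol\mu$ has a further subsequence converging to $\mu^\star$, which gives convergence of $\boldsymbol\mu$ to $\mu^\star$.
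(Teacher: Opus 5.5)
Your part (i) and the final compactness/uniqueness step match the paper. The gap is the step you yourself call the main obstacle: the lower bound on $\cL$. Everything rests on it, since it is what gives $\cQ\in L^1([1,\infty))$, and you assume it rather than prove it, via uniform-in-time bounds on $\nabla\log m$. Those bounds do not follow routinely from a bounded drift. When $d\ge2$, a merely bounded drift does not in general give a Lipschitz bound on $m$. You would need H\"older control of $\nabla u$, then Schauder or $W^{2,1}_p$ estimates, plus Harnack for the lower bound on $m$. So as written, the central step depends on a regularity theory for $m$ that you have not established.

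The idea you are missing is that no estimate on $m$ is needed, because $\cL$ is built so that the bad sign cancels. Write $\nabla q=\nabla u+\nu\nabla m/m$ and integrate the cross term by parts:
\[
\cQ(t)=\int_{\T^d}|\nabla u|^2m\,\dx-2\nu\int_{\T^d}\Delta u\,m\,\dx+\nu^2I(\mu_t)\le\|\nabla u\|_\infty^2+2\nu\|\Delta u\|_\infty+\nu^2I(\mu_t).
\]
Separately, $\Ent\ge-(2\pi)^d$ gives $\Phi(\mu_t)\ge\frac{\nu^2}{2}I(\mu_t)-\rho\nu(2\pi)^d+\inf\cF$. The Fisher information therefore cancels exactly in $\Phi-\frac12\cQ$, leaving $\cL\ge\inf\cF-\rho\nu(2\pi)^d-C$. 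Here $C$ depends only on $\sup_t\|\nabla u\|_\infty$ and $\sup_t\|\Delta u\|_\infty$, and these come from the HJB side alone:
\begin{itemize}
\item $\|\nabla u\|_\infty\le\rho^{-1}\sup_\mu\|\nabla f(\cdot,\mu)\|_\infty$, from the control representation;
\item a bound on $\partial_t u$, from Feynman--Kac applied to time difference quotients, using that $t\mapsto f(x,\mu_t)$ is Lipschitz by Assumption~\ref{asm:coupling} and the Fokker--Planck equation;
\item the bound on $\Delta u$, which then follows from the equation itself.
\end{itemize}
This cancellation is exactly why $\Phi$ contains $\frac{\nu^2}{2}I$ with that coefficient.

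Your part (ii) takes a genuinely different and conceptually appealing route. You take $C^{1,2}_{loc}$ limits of the shifted pairs and use $\int_0^T\cQ(t_n+s)\,\ds\to0$ to get $\nabla\bar q=0$. Substituting $\bar u=-\nu\log\bar m+c(t)$ into the HJB shows that $\delta_\mu\Phi(\bar\mu,\cdot)=\rho c-c'$ is constant, so Lemma~\ref{lem:Phi-derivative} applies. This version works on a finite time window. Your other variant, via uniqueness of bounded solutions, would first need a diagonal argument to produce a limit on all of $(0,\infty)$. Either way, part (ii) again relies on uniform $C^{1,2}$ bounds and positivity for $m$, which is the regularity you assumed.

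The paper avoids all of this. It uses (i) and continuity of $f$ to show that $f(\cdot,\mu_{t_n+s})\to f(\cdot,\bar\mu)$ locally uniformly. A stability lemma for the discounted HJB then gives $\nabla u(t_n+\cdot)\to\nabla\bar u$ locally uniformly, with $\bar u$ the stationary solution. Finally it passes to the limit in the weak Fokker--Planck equation, whose left side $\mu_{t_n+T}(\varphi)-\mu_{t_n}(\varphi)$ vanishes by (i). Only weak convergence of $\mu_{t_n+s}$ is needed.
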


We emphasize that we do not assume any monotonicity conditions for Theorem \ref{thm:main}. We also remark that this result does not imply the convergence of any time-dependent equilibrium by itself. However, Theorem \ref{thm:main} \eqref{eq:main-result-i} rules out periodic solutions with strictly positive period. The metric $\d_\BL$ metricizes weak convergence, and on $\cP(\T^d)$ it is equivalent to the Wasserstein-1 (or Kantorovich–Rubinstein) metric.

To formulate our result on the uniqueness of stationary solutions, let us call a family of maps $S_t:\cP^2_+(\T^d)\mapsto\cP^2_+(\T^d)$, $t\ge0$, an \emph{admissible test flow} if $S_0\mu=\mu$ for all $\mu\in\cP^2_+(\T^d)$, $\|S_h\mu-\mu\|_{C^2}\to0$ as $h\downarrow0$, and if there exists a map $V:\cP^2_+(\T^d)\mapsto C(\T^d)$ such that $\|h^{-1}(S_h\mu-\mu)-V\mu\|_\infty\to 0$ as $h\downarrow0$. An example of an admissible test flow is the heat flow $S_t=e^{t\Delta}$, and we consider this special case below.

\begin{Thm}\label{thm:main-stationary}
Let $(S_t)_{t\ge0}$ be an admissible test flow. Then, any stationary equilibrium $\mu\in\cP^2_+(\T^d)$ satisfies 
\begin{equation}\label{eq:sufficient-not-equilibrium}
\frac{\d}{\dt}\Big|_{t=0} \Phi(S_t \mu)=0.
\end{equation}
\end{Thm}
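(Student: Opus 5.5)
The plan is to combine the characterization of stationary equilibria as stationary points of $\Phi$ (Lemma \ref{lem:Phi-derivative}) with a first-order chain rule for $\Phi$ along the curve $t\mapsto S_t\mu$. The point is that the derivative of $\Phi$ along any admissible test flow is the pairing of the linear derivative with the velocity. Since the velocity has zero mass and the linear derivative at a stationary equilibrium is constant, this pairing vanishes.

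\textbf{Step 1: the derivative is a pairing.} Write $\mu_h := S_h\mu$ and $\mu(\dx)=m(x)\,\dx$, with $m\in\cP^2_+(\T^d)$. Because $\|S_h\mu-\mu\|_{C^2}\to0$, the densities $m_h$ stay in a $C^2$-neighbourhood of $m$ for small $h$. Since $m>0$ on the compact torus, they are also uniformly bounded below there. We then express the difference quotient through the mean value formula for linear derivatives:
$$
\frac{\Phi(\mu_h)-\Phi(\mu)}{h}=\int_0^1\int_{\T^d}\delta_\mu\Phi\big(\mu+\lambda(\mu_h-\mu),x\big)\,\frac{m_h(x)-m(x)}{h}\,\dx\,\d\lambda .
$$
This is valid for the entropy and Fisher information terms by direct differentiation along the segment, which stays in $\cP^2_+$ by convexity. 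It is valid for $\cF$ by the definition of the linear derivative under Assumption \ref{asm:coupling}.

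\textbf{Step 2: pass to the limit $h\downarrow0$.} The quotient $h^{-1}(m_h-m)$ converges uniformly to $V\mu$. The integrand $\delta_\mu\Phi(\cdot,x)$ is continuous with respect to $C^2$ perturbations of a strictly positive density. We check this term by term, using the formula of Lemma \ref{lem:Phi-derivative}:
$$
\delta_\mu\Phi(\mu,x)=\rho\nu(\log m+1)-\frac{\nu^2}{2}\Big(\frac{|\nabla m|^2}{m^2}+2\frac{\Delta m}{m}\Big)+f(x,\mu).
$$
Every term here depends continuously on $(m,\nabla m,\Delta m)$ where $m$ is bounded below, and $f$ is continuous in $\mu$ by assumption. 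Hence this expression converges uniformly in $x$ and $\lambda$, and we get
$$
\frac{\d}{\dt}\Big|_{t=0}\Phi(S_t\mu)=\int_{\T^d}\delta_\mu\Phi(\mu,x)\,V\mu(x)\,\dx .
$$

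\textbf{Step 3: conclude.} By Lemma \ref{lem:Phi-derivative}, $\delta_\mu\Phi(\mu,\cdot)\equiv c$ is constant when $\mu$ is a stationary equilibrium. Moreover, $\int V\mu\,\dx=\lim_{h\downarrow0} h^{-1}\int(m_h-m)\,\dx=0$, because both $m_h$ and $m$ are probability densities and the convergence is uniform. The derivative therefore equals $c\cdot 0=0$, which is \eqref{eq:sufficient-not-equilibrium}.

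The main obstacle is Step 2, specifically the Fisher information term. It involves $\Delta m$ and $1/m$, so the argument genuinely needs $C^2$ convergence and a uniform lower bound on the density along the segment. Alternatively, one can integrate by parts and write $I(\mu)$ as $\int|\nabla m|^2/m$, then differentiate this form directly using only $C^1$ control. Either way, the admissibility conditions are exactly what make this step work.
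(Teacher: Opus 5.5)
Your proposal is correct and follows the paper's argument. The chain rule along $t\mapsto S_t\mu$ gives $\int_{\T^d}\delta_\mu\Phi(\mu,x)\,(V\mu)(x)\,\dx$, and this vanishes because $\delta_\mu\Phi(\mu,\cdot)$ is constant at a stationary equilibrium and $\int_{\T^d}V\mu\,\dx=0$. The paper invokes the chain rule in one line; you justify it via the mean-value formula, $C^2$-convergence and the lower bound on the densities.

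One harmless slip: expanding Lemma \ref{lem:Phi-derivative} gives $-\nu^2\Delta m/m+\frac{\nu^2}{2}|\nabla m|^2/m^2$, with a plus sign on the gradient term. Your argument only uses continuity in $(m,\nabla m,\Delta m)$, so nothing changes.
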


Taking $(S_t)_{t\ge0}$ to be the heat flow $(e^{t\Delta})_{t\ge0}$, we provide the following necessary condition for measures to be stationary equilibria.

\begin{Cor}[Heat flow]\label{cor:heat-flow}
Any stationary equilibrium $\mu\in\cP^2_+(\T^d)$ satisfies
$$
\frac{\d}{\dt}\Big|_{t=0}\cF(\eta^\mu_t) \geq \nu(\rho+\nu)I(\mu),\qquad \text{where} \ \ (\eta^\mu_t)_{t\ge0} \ \ \text{solves} \ \ \partial_t \eta^\mu_t=\Delta\eta^\mu_t,\ \eta^\mu_0=\mu.
$$
\end{Cor}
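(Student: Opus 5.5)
The plan is to apply Theorem \ref{thm:main-stationary} with the heat flow $S_t=e^{t\Delta}$, write out the three contributions to $\frac{\d}{\dt}\Phi(\eta^\mu_t)$ at $t=0$, and solve for the $\cF$-term.

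\emph{Admissibility.} For $\mu=m\,\dx\in\cP^2_+(\T^d)$, heat-semigroup regularity on the torus gives $\eta^\mu_t\to\mu$ in $C^2$. I will take $V\mu=\Delta m$. The bound $\|h^{-1}(e^{h\Delta}m-m)-\Delta m\|_\infty\to0$ holds because $m\in C^2$: write $h^{-1}(e^{h\Delta}m-m)=h^{-1}\int_0^h e^{s\Delta}\Delta m\,\ds$ and use strong continuity of $e^{s\Delta}$ on $C(\T^d)$. Positivity of $\eta^\mu_t$ follows from the strong maximum principle. Hence Theorem \ref{thm:main-stationary} gives, for a stationary equilibrium $\mu$,
\[
0=\rho\nu\,\frac{\d}{\dt}\Big|_{t=0}\Ent(\eta^\mu_t)+\frac{\nu^2}{2}\frac{\d}{\dt}\Big|_{t=0}I(\eta^\mu_t)+\frac{\d}{\dt}\Big|_{t=0}\cF(\eta^\mu_t).
\]
I need each of the three derivatives to exist separately. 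For $\Ent$ and $I$ this follows from the computations below, using smoothness of $\eta^\mu_t$ for $t>0$ and continuity of the resulting integrands as $t\downarrow0$, which uses $m\in C^2$, $m>0$. Existence for $\cF$ then follows by subtraction.

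\emph{Entropy and Fisher terms.} Integrating by parts on $\T^d$ gives the de Bruijn identity $\frac{\d}{\dt}\Ent(\eta_t)=\int\Delta\eta_t\log\eta_t=-I(\eta_t)$. Writing $\ell=\log\eta_t$, the Bochner/Bakry--\'Emery computation in flat space gives
\[
\frac{\d}{\dt}I(\eta_t)=-2\int_{\T^d}|\nabla^2\ell|^2\eta_t\,\dx .
\]
At $t=0$ this requires third derivatives of $m$. I plan to handle that either by computing for $t>0$ and passing to the limit $t\downarrow0$, or by first integrating by parts into a form involving only $\nabla^2 m$. Substituting both identities,
\[
\frac{\d}{\dt}\Big|_{t=0}\cF(\eta^\mu_t)=\rho\nu\,I(\mu)+\nu^2\int_{\T^d}|\nabla^2\log m|^2\,m\,\dx .
\]

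\emph{Main obstacle.} It remains to show $\int|\nabla^2\log m|^2m\,\dx\ge I(\mu)$, which is a Poincaré-type inequality. The natural starting point is the identity $\int m\,\Delta\log m=-I(\mu)$ together with the vanishing mean $\int m\nabla\log m=\int\nabla m=0$. However, Cauchy--Schwarz applied to the first identity only yields $\int m|\nabla^2\log m|^2\ge\frac1d\int m(\Delta\log m)^2\ge I(\mu)^2/d$. This is weaker than what is needed when $I(\mu)$ is small.

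I would next try to use the unit spectral gap of the $2\pi$-torus for the unweighted Laplacian. One route is to pass to $v=\sqrt m$, for which $I=4\int|\nabla v|^2$, and compare $\int m|\nabla^2\log m|^2$ with $\int|\nabla^2 v|^2\ge\int|\nabla v|^2$. Another is to exploit the stationarity of $\mu$ itself, that is, the stationary system \eqref{eq:stationary-MFG-system}, which might provide the missing control. This is the step I expect to be hardest. If a clean inequality is not available, I would check whether the paper intends a different normalization of the heat flow or of $I$ that produces exactly the factor $\nu(\rho+\nu)$.
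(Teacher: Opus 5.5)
\textbf{Gap: the key inequality is left unproved.} Your reduction is correct and matches the paper's. Applying Theorem~\ref{thm:main-stationary} to $S_t=e^{t\Delta}$, together with de Bruijn's identity and $\frac{\d}{\dt}I(\eta_t)=-2\int|\nabla^2\log\eta_t|^2_{\mathrm{HS}}\,\eta_t\,\dx$, gives $\frac{\d}{\dt}\big|_{t=0}\cF(\eta^\mu_t)=\rho\nu I(\mu)+\nu^2\int_{\T^d}|\nabla^2\log m|^2m\,\dx$.

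However, you stop at the step that carries the content of the corollary. You do not prove $\int_{\T^d}|\nabla^2\log m|^2m\,\dx\ge I(\mu)$, equivalently $\frac{\d}{\dt}I(\eta_t)\le-2I(\eta_t)$. As written, you only obtain the lower bound $\rho\nu I(\mu)$. This is a pure functional inequality, valid for every $m\in\cP^2_+(\T^d)$ with the paper's normalization. Stationarity plays no role, and no renormalization is needed.

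The $v=\sqrt m$ route you sketch does not close as stated when $d\ge2$. Write $m|\nabla^2\log m|^2=4|\nabla^2v-v^{-1}\nabla v\otimes\nabla v|^2$ and integrate $\nabla\cdot(v^{-1}|\nabla v|^2\nabla v)$ over $\T^d$. This gives
\[
\int_{\T^d}|\nabla^2\log m|^2m\,\dx=4\int_{\T^d}|\nabla^2v|^2\,\dx+4\int_{\T^d}\frac{|\nabla v|^2\Delta v}{v}\,\dx .
\]
The last term can be negative. For example, take $v\propto 1+\epsilon(\cos x_1+\cos x_2+\cos(x_1+x_2))$ with small $\epsilon>0$; its leading contribution is $-(2\pi)^2\epsilon^3$. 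So the comparison $\int|\nabla^2\log m|^2m\ge4\int|\nabla^2v|^2$ fails in general.

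\textbf{How the paper closes it.} It uses two moves.

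First, it drops the off-diagonal Hessian entries, $|\nabla^2\log m|^2_{\mathrm{HS}}\ge\sum_i(\partial_{ii}\log m)^2$. By Fubini this reduces everything to the one-dimensional inequality $\int_\T|(\log g)'|^2g\,\dx\le\int_\T|(\log g)''|^2g\,\dx$ for positive $g$ on each coordinate circle (Lemma~\ref{lem:torus-inequality}).

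Second, in one dimension your substitution does work. With $v=\sqrt g$ one has $\int|(\log g)''|^2g=4\int(v''-(v')^2/v)^2$. Periodicity gives $\int((v')^3/v)'\,\dx=0$, hence $\int(v')^2v''/v=\tfrac13\int(v')^4/v^2$. So the cross term is favorable:
\[
\int|(\log g)''|^2g=4\int(v'')^2+\tfrac43\int(v')^4/v^2\ge4\int(v'')^2 .
\]
Since $v'$ has mean zero on $\R/2\pi\Z$, whose spectral gap is exactly $1$, Poincar\'e gives $\int(v'')^2\ge\int(v')^2$. Finally $4\int(v')^2=\int|(\log g)'|^2g$. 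Summing over coordinates yields $\int|\nabla^2\log m|^2m\ge I(\mu)$, and hence the stated bound with constant $\nu(\rho+\nu)$.
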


We are particularly motivated by the works of Carmona, Cormier, and Soner \cite{carmona_synchronization_2023, carmona2025kuramoto} who study a model of collective synchronization, the so-called \emph{Kuramoto MFG}. Inspired by the classical Kuramoto model, agents aim to synchronize their phases (positions) by minimizing a misalignment cost, and the model undergoes a phase transition as this cost increases. In the subcritical regime, Carmona, Cormier, and Soner \cite{carmona_synchronization_2023} are able to prove the local stability of the uniform, or \emph{incoherent}, equilibrium in which phases are distributed uniformly. As an application of our results, we establish both the uniqueness of the uniform equilibrium and the convergence of any time-dependent equilibrium to it in the subcritical regime, see Section \ref{sec:Kuramoto}.

The term ``long-time behavior'' of a mean-field game usually refers to the convergence of finite-horizon games to a stationary MFG problem, see Cardaliaguet, Lasry, Lions, and Porretta \cite{long-time-local,long-time-nonlocal} for both local and non-local MFGs, and Cardaliaguet and Porretta \cite{long-time-master} for an approach based on the Nash-Lasry-Lions equation (also known as the master equation). Gomes, Mohr, and Souza \cite{gomes2010discrete} treat this problem in the discrete state case. \cite{long-time-master} also considers the limit as the discount factor goes to zero. Let us also note that the references above rely on Lasry-Lions monotonicity. Cirant and Porretta  \cite{cirant2021long} extended the analysis to  mildly non-monotone MFGs. Without monotonicity, Cardaliaguet and Masoero \cite{cardaliaguet2020weak} study this problem for potential MFGs and develop a weak KAM theory. In the non-monotone case, Cirant \cite{cirant2019existence} and Cirant and Nurbekyan \cite{cirant2018variational} show the existence of time-periodic solutions. We refer to Carmona, Tangpi, and Zhang \cite{kevin} and Bayraktar and Zhang \cite{bayraktar2023solvability} for probabilistic analyses. Finally, we remark that the free energy $\Phi$ also plays an important role in mean-field optimization, see Claisse et al.~\cite{claisse2026mean}. However, in our problem, the correction $\cQ$ is crucial.

\vspace{1em}
Throughout this manuscript, we impose the following assumption on $f$.

\begin{Asm}\label{asm:coupling}
The interaction cost $f$ satisfies the following.
\begin{enumerate}[(i)]
\item \emph{Potential structure.} There exists $\cF:\cP(\T^d)\mapsto\R$ such that $f=\delta_\mu\cF$. The functional $\cF$ is bounded from below. Here, $\delta_\mu$ denotes the linear derivative.
\item \emph{Regularity.} $f$ is twice continuously differentiable in $x$, and once continuously differentiable in $\mu$ with derivative $\delta_\mu f(x,\mu,y)$. We assume that 
$$
\sup_{\mu\in\cP(\T^d)}\|f(\cdot,\mu)\|_{C^2(\T^d)}<\infty,
$$
and that $\nabla_x\delta_\mu f$, $\nabla_{y}\delta_\mu f$, $\partial_{x_i}\partial_{y_j}\delta_\mu f$, and $\Delta_y \delta_\mu f$ all exist and are continuous on $\T^d\times\cP(\T^d)\times\T^d$.
\end{enumerate}
\end{Asm}

\noindent

\vspace{0.3em}

For future use, we remark that, under Assumption \ref{asm:coupling}, it is classical that, for any $\mu_0\in\cP(\T^d)$, the system \eqref{eq:inhomog-MFG-system} admits a classical solution on $(0,\infty)\times\T^d$. Due to non-degenerate Brownian noise, the density $m$ is strictly positive on $(0,\infty)\times\T^d$. We also note that the assumptions on $f$ are sufficient to justify that mixed derivatives in $t$ and $x$ commute. We refer to \cite{long-time-master} and \cite[Lemma 4.5]{carmona_synchronization_2023} for details. In Appendix \ref{app:HJB-estimates}, we derive $L^\infty$ bounds for the solutions. Finally, let us note that by elliptic regularity, it is also classical that \eqref{eq:stationary-MFG-system} admits a classical solution, see Bardi and Feleqi \cite{bardi2016nonlinear}. In this case, the stationary density takes the form, for $x\in\T^d$,
\begin{equation}\label{eq:stationary-density}
m(x)= \frac{\exp(-u(x)/\nu)}{Z},\qquad Z=\int_{\T^d} \exp(-u(y)/\nu)\,\dy.
\end{equation}

\vspace{1em}

{\bf Structure.} We prove the statements on the long-time behavior recorded in Theorem \ref{thm:main} \eqref{eq:main-result-i} and \eqref{eq:main-result-ii} in Section \ref{sec:proof-main}. In Section \ref{sec:uniqueness-stationary}, we discuss the uniqueness of stationary equilibria, and, in particular, prove Theorem \ref{thm:main-stationary} and Corollary \ref{cor:heat-flow}. Section \ref{sec:Kuramoto} applies our results to the Kuramoto MFG, and the Appendices \ref{app:HJB-estimates}--\ref{app:fisher-info-lower-bound} collect auxiliary results.

\vspace{1em}
{\bf Notation.} For a vector $x\in\R^d$, $|x|:=(x_1^2+\ldots+x_d^2)^{1/2}$ denotes its Euclidean norm. The set of non-negative integers is denoted by $\Z_+=\{0,1,2,\ldots\}$. The set of probability measures on $\T^d$ is denoted by $\cP(\T^d)$ and is endowed with the topology of weak convergence. The \emph{bounded Lipschitz distance} on $\cP(\T^d)$ is defined by
$$
\d_\BL(\mu,\nu):=\sup\big\{|\mu(f)-\nu(f)|:f\in C(\T^d)\  \text{with} \ \|f\|_{\infty}+\Lip(f)\leq1\big\},\qquad \mu,\nu\in\cP(\T^d).
$$
Here, the integral of a function $f:\T^d\mapsto\R$ with respect to a measure $\mu\in\cP(\T^d)$ is denoted by $\mu(f):=\int f\,\d\mu$, and $\Lip(f)$ denotes the optimal Lipschitz constant of the function $f$.
It is classical that $\d_\BL$ metrizes weak convergence and that $(\cP(\T^d),\d_\BL)$ is a compact metric space. As in the introduction, we set
$$
\cP^2_+(\T^d) := \big\{m:\T^d\mapsto(0,\infty) \, \big|\, m \ \text{is twice continuously differentiable and} \ \int_{\T^d} m\,\dx =1\big\},
$$
and, with a slight abuse of notation, we also write $\mu\in\cP^2_+(\T^d)$ whenever $\mu\in\cP(\T^d)$ has a Lebesgue density $m\in\cP^2_+(\T^d)$.
We call a function $\phi:\cP(\T^d)\mapsto\R$ \emph{linearly differentiable} if there exists a map $\delta_\mu\phi:\cP(\T^d)\times\T^d\mapsto\R$ such that, for every $\mu,\nu\in\cP(\T^d)$,
$$
\phi(\nu)-\phi(\mu)=\int_0^1\int_{\T^d}\delta_\mu\phi((1-\tau)\mu+\tau\nu,x)(\nu-\mu)(\dx)\d\tau.
$$
The linear derivative $\delta_\mu\phi(\mu,x)$ is only defined up to a $\mu$-dependent constant. Finally, derivatives at time $0$ are understood as right-sided derivatives.

\section{Long-time behavior}\label{sec:proof-main}

This section establishes the results on the long-time behavior of equilibria stated in Theorem \ref{thm:main}, parts \eqref{eq:main-result-i} and \eqref{eq:main-result-ii}. As explained in the introduction, our proof relies on defining a Lyapunov functional $\cL$ of the MFG system. Now fix a time-dependent solution $(u,m)=(u(t,\cdot),m(t,\cdot))_{t>0}$ of the MFG system \eqref{eq:inhomog-MFG-system} with an arbitrary initial condition $\mu_0\in\cP(\T^d)$. We set $\mu_t(\dx):=m(t,x)\,\dx$. Recall $\Phi$ from \eqref{eq:free-energy}.

\begin{Lem}[Linear derivative of $\Phi$]\label{lem:Phi-derivative}  
The linear derivative of $\Phi$ is given by
$$
\delta_\mu\Phi(\mu,x) =  \nu \rho \log m(x)-\nu^2 \Delta \log m(x)-\frac{\nu^2}{2} |\nabla\log m(x)|^2 + f(x,\mu),\qquad (\mu,x)\in \cP^2_+(\T^d)\times\T^d,
$$
where $m$ denotes the density of $\mu$. In particular, $\mu$ is a stationary equilibrium if and only if $\delta_\mu\Phi(\mu,\cdot)$ is a constant independent of $x$.
\end{Lem}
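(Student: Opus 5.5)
We compute the linear derivative of each of the three terms of $\Phi$ separately and then relate the resulting stationarity condition to the stationary MFG system \eqref{eq:stationary-MFG-system}. Throughout we restrict to perturbations $\mu^\eps=\mu+\eps(\eta-\mu)$ with $\mu,\eta\in\cP^2_+(\T^d)$. Such perturbations stay in $\cP^2_+(\T^d)$ for $\eps\in[0,1]$, and linear derivatives on $\cP^2_+$ are only needed up to additive constants.

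\textbf{The three derivatives.} For $\cF$ we have $\delta_\mu\cF=f$ by Assumption \ref{asm:coupling}. For the entropy, differentiating $\int m\log m$ along $m^\eps=m+\eps h$, with $\int h\,\dx=0$, gives $\int(\log m+1)h\,\dx$. Hence $\delta_\mu\Ent=\log m$ modulo constants.

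For the Fisher information, write $I(m)=\int|\nabla m|^2/m$. Its directional derivative is
$$
\int\Big(\frac{2\nabla m\cdot\nabla h}{m}-\frac{|\nabla m|^2}{m^2}h\Big)\dx .
$$
We integrate the first term by parts on the torus; there are no boundary terms, and $m\in C^2$ with $m>0$. This gives
$$
-2\int \nabla\cdot\Big(\frac{\nabla m}{m}\Big)h\,\dx=-2\int \Delta\log m\; h\,\dx .
$$
So $\delta_\mu I=-2\Delta\log m-|\nabla\log m|^2$. Multiplying by $\nu^2/2$ and adding the entropy and coupling terms yields the claimed formula.

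\textbf{Justifying the integral representation.} To pass from directional derivatives to the formula in the definition of linear differentiability, we integrate in $\tau$ along the segment. This requires the derivative to be continuous in $\tau$, which holds because $m^\tau=(1-\tau)m+\tau m'$ is $C^2$ and bounded below uniformly in $\tau$. Uniform positivity of $m^\tau$ on the compact torus is the only real technical point here. It is routine, since both endpoints are strictly positive continuous functions.

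\textbf{Equivalence with stationary equilibria.} Suppose first that $(u,m)$ solves \eqref{eq:stationary-MFG-system}. By \eqref{eq:stationary-density}, $u=-\nu\log m-\nu\log Z$. Substituting into the HJB equation gives
$$
-\nu\rho\log m-\nu\rho\log Z+\nu^2\Delta\log m+\tfrac{\nu^2}{2}|\nabla\log m|^2=f(x,\mu),
$$
that is, $\delta_\mu\Phi(\mu,\cdot)=-\nu\rho\log Z$, which is constant.

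Conversely, suppose $\delta_\mu\Phi(\mu,\cdot)\equiv c$. Set $u:=-\nu\log m+c/\rho$. Reversing the computation shows that $u$ solves the stationary HJB equation. Moreover $m\nabla u=-\nu\nabla m$, so $-\nu\Delta m-\nabla\cdot(m\nabla u)=0$ and the Fokker--Planck equation holds. Thus $(u,m)$ is a classical solution of \eqref{eq:stationary-MFG-system}.

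It remains to use the stated one-to-one correspondence between stationary equilibria and solutions of \eqref{eq:stationary-MFG-system}. The one thing to check is that every stationary density has the Gibbs form \eqref{eq:stationary-density}. This follows because on the torus the stationary Fokker--Planck equation has a unique probability solution for a given $C^1$ drift, and $e^{-u/\nu}/Z$ is such a solution.

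I expect no serious obstacle. The main care is the bookkeeping of additive constants, including the $\rho$-dependent constant $c/\rho$, which is where $\rho>0$ is used, and the sign conventions in the integration by parts.
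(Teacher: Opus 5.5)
Your proposal is correct and takes the same route as the paper. The paper cites \cite{claisse2026mean} for the first-variation computation and reads off the equivalence from the Gibbs form \eqref{eq:stationary-density}. You instead carry out that computation explicitly (entropy, Fisher information via integration by parts, and the integral representation along segments in $\cP^2_+(\T^d)$), and you check both directions of the equivalence, including the converse construction $u=-\nu\log m+c/\rho$.
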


\begin{proof}
An explicit calculation of the linear derivative can be found in \cite[Proposition 2.4]{claisse2026mean}. The claim about the stationary equilibria follows from the formula \eqref{eq:stationary-density}.
\end{proof}
 Next, set
\begin{equation}\label{eq:w-and-q}
w = - \nu \log(m),\qquad q := u - w,\qquad \text{on}\ (0,\infty)\times\T^d,
\end{equation}
and, as outlined in the introduction,
\begin{equation}\label{eq:Lyapunov-def}
\cE(t):=\Phi(\mu_t),\qquad \cQ(t):=\int_{\T^d}|\nabla q(t,x)|^2\, m(t,x)\,\d x,\qquad \cL(t):=\cE(t)-\frac12 \cQ(t),\qquad t>0.
\end{equation}

The main observation is that $\cL$ acts as a Lyapunov functional for the system \eqref{eq:inhomog-MFG-system}, see Proposition \ref{prop:Lyapunov}. To prove it, we first need a short lemma.

\begin{Lem}
On $(0,\infty)\times\T^d$,
\begin{equation}\label{eq:m-change}
\partial_t m = \nabla \cdot(m\nabla q)
\end{equation}
and
\begin{equation}\label{eq:w-change}
\partial_t w = \nabla q\cdot \nabla w - \nu\Delta q.
\end{equation}
\end{Lem}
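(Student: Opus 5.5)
Both identities follow by direct computation from \eqref{eq:inhomog-MFG-system}, using that $m>0$ and that $(u,m)$ is classical on $(0,\infty)\times\T^d$. I will substitute the definitions \eqref{eq:w-and-q} and rearrange.

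\textbf{Step 1: the identity \eqref{eq:m-change}.} Since $w=-\nu\log m$, we have $\nabla w=-\nu\nabla m/m$, so $m\nabla w=-\nu\nabla m$. Then
$$
m\nabla q=m\nabla u-m\nabla w=m\nabla u+\nu\nabla m .
$$
Taking the divergence gives $\nabla\cdot(m\nabla q)=\nabla\cdot(m\nabla u)+\nu\Delta m$. By the Fokker--Planck equation, the right-hand side equals $\partial_t m$, which is \eqref{eq:m-change}.

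\textbf{Step 2: the identity \eqref{eq:w-change}.} By \eqref{eq:m-change},
$$
\partial_t w=-\nu\frac{\partial_t m}{m}=-\frac{\nu}{m}\nabla\cdot(m\nabla q)=-\frac{\nu}{m}\big(\nabla m\cdot\nabla q+m\Delta q\big).
$$
Using $-\nu\nabla m/m=\nabla w$, the first term equals $\nabla w\cdot\nabla q$. The second term equals $-\nu\Delta q$. Together these give \eqref{eq:w-change}.

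\textbf{Main obstacle.} The only delicate point is regularity. We need $\partial_t m$, $\nabla m$ and $\Delta q$ to exist classically and $\log m$ to be differentiable. Both hold on $(0,\infty)\times\T^d$, since solutions are classical there and $m$ is strictly positive by the non-degenerate noise, as recalled after Assumption \ref{asm:coupling}. Notably, the HJB equation is not needed for these two identities; it enters only later, when differentiating $\cL$.
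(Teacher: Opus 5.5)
Your proposal is correct and follows essentially the same route as the paper: you rewrite the Fokker--Planck flux using $m\nabla w=-\nu\nabla m$ to get \eqref{eq:m-change}, then divide by $m$ and expand the divergence to get \eqref{eq:w-change}. Your regularity remarks (classical solutions and $m>0$ on $(0,\infty)\times\T^d$) and your observation that the HJB equation is not used here are accurate.
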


\begin{proof}
First note that $\nabla w=-\nu \nabla m/m$. Therefore,
$$
\partial_t m = \nu \Delta m+ \nabla\cdot(m\nabla u) = \nu \Delta m+ \nabla \cdot(m (\nabla q+\nabla w)) = \nu \Delta m+ \nabla \cdot(m (\nabla q-\nu \nabla m/m)) = \nabla \cdot(m\nabla q).
$$
Next, 
\begin{equation*}
\partial_t w = -\nu\frac{\partial_t m}{m} = -\nu \frac{\nabla\cdot(m\nabla q)}{m}= -\nu \frac{\nabla m\cdot \nabla q}{m} - \nu \Delta q = \nabla w\cdot \nabla q-\nu \Delta q. \qedhere
\end{equation*}
\end{proof}

\vspace{1em}

\begin{Prop}\label{prop:Lyapunov} 
The functional $\cL:(0,\infty)\mapsto\R$ is bounded from below and satisfies
\begin{equation}\label{eq:Lyapunov}
\dot \cL(t) = -\rho \cQ(t)\leq0.
\end{equation}

\end{Prop}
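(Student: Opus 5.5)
The plan is to get \eqref{eq:Lyapunov} from an exact identity for the linear derivative along the flow, and then to obtain the lower bound from an algebraic cancellation of the Fisher information.

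\textbf{Step 1: $\delta_\mu\Phi(\mu_t,\cdot)$ in terms of $q$.} Since $w=-\nu\log m$, Lemma \ref{lem:Phi-derivative} reads
$$
\delta_\mu\Phi(\mu_t,\cdot)=-\rho w+\nu\Delta w-\tfrac12|\nabla w|^2+f(\cdot,\mu_t).
$$
Substitute $f(\cdot,\mu_t)=-\partial_t u+\rho u-\nu\Delta u+\tfrac12|\nabla u|^2$ from the HJB equation and write $u=q+w$. Using $|\nabla u|^2-|\nabla w|^2=|\nabla q|^2+2\nabla q\cdot\nabla w$, this gives
$$
\delta_\mu\Phi=-\partial_t u+\rho q-\nu\Delta q+\nabla q\cdot\nabla w+\tfrac12|\nabla q|^2 .
$$
By \eqref{eq:w-change}, $\nabla q\cdot\nabla w-\nu\Delta q=\partial_t w$. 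Hence
$$
\delta_\mu\Phi(\mu_t,\cdot)=-\partial_t q+\rho q+\tfrac12|\nabla q|^2 .
$$

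\textbf{Step 2: time derivatives.} By the chain rule for linear derivatives, \eqref{eq:m-change}, and integration by parts on $\T^d$ (no boundary terms),
$$
\dot\cE=\int\delta_\mu\Phi\,\partial_t m\,\dx=-\int m\nabla q\cdot\nabla\big(-\partial_t q+\rho q+\tfrac12|\nabla q|^2\big)\,\dx
=\int m\nabla q\cdot\nabla\partial_t q\,\dx-\rho\cQ-\tfrac12\int m\nabla q\cdot\nabla|\nabla q|^2\,\dx .
$$
Differentiating $\cQ$ directly and using \eqref{eq:m-change} once more gives
$$
\dot\cQ=2\int m\nabla q\cdot\nabla\partial_t q\,\dx-\int m\nabla q\cdot\nabla|\nabla q|^2\,\dx .
$$
Therefore $\dot\cL=\dot\cE-\tfrac12\dot\cQ=-\rho\cQ\le0$.

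The justification I would need to supply is that differentiation under the integral and the commutation $\nabla\partial_t=\partial_t\nabla$ are legitimate. This holds because the solution is classical with $m>0$ on $(0,\infty)\times\T^d$, and because of the remark following Assumption \ref{asm:coupling} on mixed derivatives.

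\textbf{Step 3: lower bound.} This is the main obstacle, since $-\tfrac12\cQ$ contains $-\tfrac{\nu^2}{2}I(\mu_t)$, which could a priori be unbounded. Expand $\nabla q=\nabla u+\nu\nabla m/m$ to get
$$
\tfrac12\cQ=\tfrac12\int|\nabla u|^2m\,\dx+\nu\int\nabla u\cdot\nabla m\,\dx+\tfrac{\nu^2}{2}I(\mu_t).
$$
The Fisher information then cancels exactly in $\cL$, and integrating by parts in the middle term yields
$$
\cL(t)=\rho\nu\Ent(\mu_t)+\cF(\mu_t)-\tfrac12\int|\nabla u|^2m\,\dx+\nu\int\Delta u\,m\,\dx .
$$
For the individual terms I would argue as follows.
\begin{enumerate}[(i)]
\item By Jensen, $\Ent(\mu_t)\ge-d\log(2\pi)$.
\item $\cF$ is bounded below by Assumption \ref{asm:coupling}.
\item The last two terms are bounded by $\|\nabla u(t)\|_\infty^2$ and $\|\Delta u(t)\|_\infty$, respectively.
\end{enumerate}
For (iii) I would invoke the $L^\infty$ estimates of Appendix \ref{app:HJB-estimates}, which give bounds on $\nabla u$ and $\Delta u$ that are uniform in $t\ge t_0>0$. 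Together these give $\inf_{t\ge t_0}\cL(t)>-\infty$. On $(0,t_0]$ the bound follows anyway, since $\cL$ is nonincreasing and finite at $t_0$.
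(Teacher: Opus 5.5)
Your proposal is correct and follows essentially the same route as the paper. You use the HJB equation together with \eqref{eq:w-change} to rewrite $\delta_\mu\Phi(\mu_t,\cdot)=\rho q-\partial_t q+\frac12|\nabla q|^2$, integrate by parts against $\partial_t m=\nabla\cdot(m\nabla q)$, and obtain the lower bound by cancelling the Fisher information and applying the $L^\infty$ bounds on $\nabla u$ and $\Delta u$. Writing $\cL$ as an exact identity instead of the paper's pair of inequalities, and using monotonicity of $\cL$ to handle $t$ near $0$, are only cosmetic differences.
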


\begin{proof}
By definition of $w$, $\log m=-w/\nu$. By the chain rule and Lemma \ref{lem:Phi-derivative},
\begin{align*}
\dot \cE(t) &=
\frac{\d}{\dt} \Phi(\mu_t) \\
&=  \int_{\T^d} \left(\nu \rho \log m-\nu^2 \Delta \log m-\frac{\nu^2}{2} |\nabla\log m|^2 + f(x,\mu_t)  \right)\partial_tm(t,x)\,\d x\\
&=  \int_{\T^d} \left(-\rho w+\nu \Delta w-\frac12 |\nabla w|^2 + f(x,\mu_t)  \right)\partial_tm(t,x)\,\d x.
\end{align*}
Now,
\begin{align*}
0&= \rho u -\partial_t u-\nu\Delta u+\frac12 |\nabla u|^2 - f(x,\mu_t)\qquad \text{(since $(u,m)$ satisfies \eqref{eq:inhomog-MFG-system})}\\
&= \rho (q+w) - \partial_t (q+w) - \nu \Delta(q+w) +\frac12 |\nabla q+\nabla w|^2 -f(x,\mu_t) \qquad \text{(since $u=q+w$)}\\
&=\rho q - \partial_t q  +\frac12 |\nabla q|^2 + \rho w - \nu \Delta w + \frac12 |\nabla w|^2 -f(x,\mu_t) + \underbrace{\big(-\partial_t w -\nu\Delta q +\nabla q\cdot \nabla w\big)}_{=0\ \text{by \eqref{eq:w-change}}}.
\end{align*}
Hence,
$$
\dot\cE(t)= \int_{\T^d}\Big(\rho q -\partial_t q + \frac12 |\nabla q|^2 \Big)\,\partial_t m\,\dx.
$$
Using \eqref{eq:m-change} and integration by parts,
\begin{align*}
\rho \int_{\T^d} q \partial_t m\,\dx = \rho \int_{\T^d} q \nabla \cdot(m\nabla q)\,\dx = -\rho \int_{\T^d} |\nabla q|^2m\,\dx = -\rho \cQ(t).
\end{align*}
Similarly,
$$
-\int_{\T^d} (\partial_t q )( \partial_t m) \,\dx = -\int_{\T^d} (\partial_t q )\nabla \cdot(m\nabla q) \,\dx = \int_{\T^d} \nabla (\partial_t q)\cdot \nabla q\, m \,\dx
$$
Now
$$
\dot \cQ(t) = \frac{\d}{\dt}\int_{\T^d} |\nabla q|^2m\,\dx=\int_{\T^d} |\nabla q|^2 \partial_t m \,\dx + 2 \int_{\T^d} \nabla (\partial_t q)\cdot \nabla q \, m\,\dx,
$$
so that
$$
-\int (\partial_t q )( \partial_t m) \,\dx =  \frac12 \dot \cQ(t) -  \frac12 \int |\nabla q|^2 \partial_t m\,\dx.
$$
In total, this establishes $\dot \cE(t) = -\rho \cQ(t)+\frac12 \dot \cQ(t)$ since the last term cancels. Hence
$$
\dot \cL(t) = \dot\cE(t) - \frac12 \dot \cQ(t) = -\rho \cQ(t),
$$
as claimed in \eqref{eq:Lyapunov}.
Next, 
\begin{align*}
\cQ(t)&= \int_{\T^d} |\nabla u - \nabla w|^2m\,\d x\\
&= \int_{\T^d} \Big|\nabla u + \nu \frac{\nabla m}{m}\Big|^2 m\,\dx \qquad \text{(since $\nabla w = -\nu \nabla m/m$)}\\
&= \int_{\T^d} |\nabla u|^2 m\,\dx + 2\nu \int_{\T^d}  \nabla u\cdot \nabla m \,\dx + \nu^2 \int_{\T^d} \frac{|\nabla m|^2}{m}\,\dx\\
&\leq \|\nabla u(t,\cdot)\|_{L^\infty(\T^d)}^2 + 2 \nu \|\Delta u(t,\cdot)\|_{L^\infty(\T^d)} + \nu^2 I(\mu_t).
\end{align*}
Using $x\log x\geq x-1$ for $x\ge0$,
$$
\Ent(\mu)=\int_{\T^d} \log (m) m\,\dx \geq \int_{\T^d} (m-1)\,\dx \geq - (2\pi)^d,
$$
and recall that $\cF$ is lower bounded by Assumption \ref{asm:coupling}. This leads to
$$
\cE(t) = \frac{\nu^2}{2}I(\mu_t) + \rho \nu \, \Ent(\mu_t) +\cF(\mu_t)\geq \frac{\nu^2}{2}I(\mu_t) -\rho\nu(2\pi)^d + \inf \cF.
$$
At the same time, by Lemma \ref{lem:L-infty-bounds}, there exists a constant $c_0>0$ such that 
$$
\sup_{t\geq0} \frac12 \|\nabla u(t,\cdot)\|^2_\infty + \nu \|\Delta u(t,\cdot)\|_\infty\leq c_0.
$$
Combining this with the bound for $\cQ(t)$, we obtain a lower bound for $\cL$:
\begin{equation*}
\frac12 \cQ(t)\leq  \cE(t) - \inf \cF + c_0 + \rho\nu(2\pi)^d\quad \Rightarrow \quad \cL(t)=\cE(t)-\frac12 \cQ(t) \geq \inf\cF -c_0-\rho\nu(2\pi)^d>-\infty.\qedhere
\end{equation*}
\end{proof}

\begin{Cor}\label{cor:Q-integrable}
The function $\cQ:[1,\infty)\mapsto[0,\infty)$ is Lebesgue-integrable:
\begin{equation}\label{eq:Q-integrable}
    \int_1^\infty \cQ(t)\,\d t<\infty.
\end{equation}
\end{Cor}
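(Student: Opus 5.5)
The plan is to integrate the dissipation identity \eqref{eq:Lyapunov} from Proposition \ref{prop:Lyapunov} over $[1,T]$ and then use the lower bound on $\cL$ from the same proposition.

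\textbf{Integration.} For $T>1$ we obtain
$$
\rho\int_1^T \cQ(t)\,\dt = \cL(1)-\cL(T) \le \cL(1) - \inf_{t>0}\cL(t).
$$
The right-hand side is independent of $T$.

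\textbf{Finiteness of the two terms.} Proposition \ref{prop:Lyapunov} already shows that $\inf_{t>0}\cL(t)>-\infty$. It remains to check that $\cL(1)$ is finite. At the positive time $t=1$ the solution is classical and the density $m(1,\cdot)$ is strictly positive and $C^2$. Hence $\Ent(\mu_1)$, $I(\mu_1)$ and $\cQ(1)$ are all finite. Moreover, $\cF(\mu_1)$ is finite because $\cF$ is real-valued.

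\textbf{Conclusion.} Since $\cQ\ge0$, monotone convergence as $T\uparrow\infty$ gives
$$
\int_1^\infty\cQ\,\dt\le \rho^{-1}\big(\cL(1)-\inf\cL\big)<\infty,
$$
using $\rho>0$.

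\textbf{Main technical point.} The fundamental theorem of calculus must apply to $\cL$ on $[1,T]$. This requires $\cL$ to be absolutely continuous there, with derivative given by \eqref{eq:Lyapunov}. That property holds because $(u,m)$ is classical on $(0,\infty)\times\T^d$, $m$ is bounded away from zero on compact time intervals, and mixed derivatives commute. These facts are exactly what was used in the proof of Proposition \ref{prop:Lyapunov}, so $\cL$ is $C^1$ on $(0,\infty)$.

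Starting at $t=1$ rather than $t=0$ avoids the possibly infinite values of $\Phi(\mu_0)$ and $\cQ$ as $t\downarrow0$.
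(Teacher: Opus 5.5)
Your proposal is correct and matches the paper's proof. Both integrate $\dot\cL=-\rho\cQ$ over $[1,T]$, bound $\cL(1)-\cL(T)$ by $\cL(1)-\inf\cL$ using the lower bound from Proposition~\ref{prop:Lyapunov}, and let $T\uparrow\infty$ using $\rho>0$; your remarks on the finiteness of $\cL(1)$ and the $C^1$ regularity of $\cL$ on $(0,\infty)$ just spell out what the paper leaves implicit.
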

\begin{proof}
Integrating \eqref{eq:Lyapunov} yields, for any $T\ge 1$,
\begin{align*}
\rho \int_1^T \cQ(s)\,\ds = \cL(1)-\cL(T) \leq \cL(1)-\inf \cL(\cdot).
\end{align*}
Since $\rho>0$, taking $T\uparrow \infty$ shows that $\cQ\in L^1([1,\infty))$.
\end{proof}

The next corollary establishes Theorem \ref{thm:main} \eqref{eq:main-result-i}.

\begin{Cor}\label{cor:mu-shift}
For any $T>0$,
\begin{equation}\label{eq:mu-shift}
\lim_{t\uparrow\infty}\sup_{s\in[0,T]}\,\d_\BL(\mu_t,\mu_{t+s})=0.
\end{equation}
\end{Cor}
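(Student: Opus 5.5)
The plan is to use the continuity equation \eqref{eq:m-change}, $\partial_t m=\nabla\cdot(m\nabla q)$, together with the integrability of $\cQ$ from Corollary \ref{cor:Q-integrable}. This lets us control the bounded Lipschitz distance between $\mu_t$ and $\mu_{t+s}$ by the kinetic-energy integral $\int_t^{t+s}\cQ$.

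\textbf{Step 1: a bound for a single test function.} Fix a test function $\varphi$ with $\|\varphi\|_\infty+\Lip(\varphi)\le 1$. First take $\varphi$ smooth, and extend to Lipschitz $\varphi$ afterwards by mollification: mollification changes $\mu(\varphi)$ by at most $\eps$ uniformly in $\mu$ and does not increase $\Lip(\varphi)$. For $t\ge1$ and $s\in[0,T]$, integration by parts in $x$ gives
\begin{align*}
\mu_{t+s}(\varphi)-\mu_t(\varphi)=\int_t^{t+s}\int_{\T^d}\varphi\,\nabla\cdot(m\nabla q)\,\dx\,\dr=-\int_t^{t+s}\int_{\T^d}\nabla\varphi\cdot\nabla q\, m\,\dx\,\dr .
\end{align*}
Since $|\nabla\varphi|\le1$ and $m$ is a probability density, the Cauchy--Schwarz inequality in $m\,\dx$ and then in $\dr$ gives
\begin{align*}
|\mu_{t+s}(\varphi)-\mu_t(\varphi)|\le\int_t^{t+s}\cQ(r)^{1/2}\,\dr\le \sqrt{T}\Big(\int_t^{\infty}\cQ(r)\,\dr\Big)^{1/2}.
\end{align*}

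\textbf{Step 2: conclusion.} Taking the supremum over $\varphi$ and over $s\in[0,T]$, we obtain
\begin{align*}
\sup_{s\in[0,T]}\d_\BL(\mu_t,\mu_{t+s})\le\sqrt{T}\Big(\int_t^\infty\cQ\Big)^{1/2}.
\end{align*}
By \eqref{eq:Q-integrable}, the tail $\int_t^\infty\cQ$ tends to $0$ as $t\uparrow\infty$, which proves \eqref{eq:mu-shift}.

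\textbf{Main obstacle.} The delicate step is justifying the time differentiation and the Fubini/integration by parts on $[t,t+s]\subset[1,\infty)$. This needs regularity of $m$ and $q$ there. Since $m$ is classical and strictly positive for positive times, $q=u+\nu\log m$ is $C^1$ in $x$ and $m\nabla q=m\nabla u+\nu\nabla m$ is continuous. Hence $t\mapsto\mu_t(\varphi)$ is $C^1$ with derivative $-\int\nabla\varphi\cdot(m\nabla u+\nu\nabla m)\,\dx$, directly from the weak Fokker--Planck equation. This avoids any issue with $\log m$. The restriction to $t\ge1$ is harmless because only the limit $t\uparrow\infty$ matters.
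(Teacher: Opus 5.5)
Your proposal is correct and follows the paper's proof essentially step for step. You use the continuity equation \eqref{eq:m-change} and integrate by parts against a smooth test function, then apply Cauchy--Schwarz first in $m\,\dx$ and then in time, and conclude with the integrability of $\cQ$ from Corollary \ref{cor:Q-integrable}. The only cosmetic difference is that you bound by the tail $\int_t^\infty\cQ$ rather than applying dominated convergence to $\mathds{1}_{[t,t+T]}\cQ$, which is equivalent.
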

\begin{proof}
It suffices to establish
\begin{equation}\label{eq:mu-shift-proof}
\d_\BL(\mu_s,\mu_t)\leq |t-s|^{1/2} \left(\int_s^t \cQ(u)\,\d u\right)^{1/2}
\end{equation}
for any $1\le s\leq t$. Indeed, we then conclude, 
$$
\sup_{s\in[0,T]}\,\d_\BL(\mu_t,\mu_{t+s}) \leq T^{1/2}\left(\int_1^\infty \cQ(u)\, \mathds{1}_{[t,t+T]}(u)\,\d u\right)^{1/2}\longrightarrow\, 0,\qquad \text{as} \ t\uparrow\infty,
$$
by dominated convergence since $\cQ\in L^1([1,\infty))$ by Corollary \ref{cor:Q-integrable}.

Let $\varphi:\T^d\mapsto\R$ be a bounded Lipschitz function, which we may, without loss of generality, take to be smooth. We compute, for $t>0$,
\begin{align*}
\frac{\d}{\dt} \mu_t(\varphi) &= \int_{\T^d} \varphi\, \partial_tm\,\dx\\
&=\int_{\T^d} \varphi \, \nabla\cdot(m\nabla q)\,\dx\qquad \text{(from \eqref{eq:m-change})}\\
&=-\int_{\T^d} \,\nabla \varphi\cdot \nabla q\,m\,\dx\qquad \text{(integration by parts)}.
\end{align*}
Hence, for $1\leq s\leq t$,
\begin{align*}
|\mu_{t}(\varphi)-\mu_s(\varphi)|&\leq \int_s^{t} \Big|\frac{\d}{\d u} \mu_u(\varphi) \Big|\,\d u \\
&\leq \int_s^{t} \int_{\T^d} | \nabla \varphi(x)\cdot \nabla q(u,x)|\, m(u,x)\,\dx\,\d u\\
&\leq  \Lip(\varphi)\ \int_s^t \left(\int_{\T^d} |\nabla q(u,x)|^2 \, m(u,x)\,\d x\right)^{1/2}\,\d u\qquad \text{(Cauchy-Schwarz)}\\
&= \Lip(\varphi)\, \int_s^t \sqrt{\cQ(u)}\,\d u\\
&\leq \Lip(\varphi) \, |t-s|^{1/2} \left(\int_s^t \cQ(u)\,\d u\right)^{1/2}\qquad \text{(Cauchy-Schwarz)}.
\end{align*}
Taking the supremum over $\varphi$ shows \eqref{eq:mu-shift-proof}, as claimed.
\end{proof}

The next Proposition \ref{prop:limit-points-are-solutions} establishes the first claim recorded in Theorem \ref{thm:main} \eqref{eq:main-result-ii}. To prove it, we note the next lemma which follows by joint continuity of $f:\T^d\times\cP(\T^d)\mapsto\R$ and compactness of $\cP(\T^d)$.

\begin{Lem}\label{lem:f-continuous}
$f$ is uniformly continuous viewed as a map $(\cP(\T^d),\d_\BL)\mapsto (C(\T^d),\|\cdot\|_\infty)$.
\end{Lem}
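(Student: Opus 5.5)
The plan is a standard compactness argument: combine the joint continuity of $f$ with the compactness of $(\cP(\T^d),\d_\BL)$ and the fact that $\T^d$ is compact. First, Assumption \ref{asm:coupling}(ii) gives that $f:\T^d\times\cP(\T^d)\to\R$ is jointly continuous. This holds because $f$ is once continuously differentiable in $\mu$, and the linear-derivative formula
$$
f(x,\mu')-f(x,\mu)=\int_0^1\!\!\int_{\T^d}\delta_\mu f(x,(1-\tau)\mu+\tau\mu',y)\,(\mu'-\mu)(\dy)\,\d\tau
$$
holds with $\delta_\mu f$ continuous, hence bounded, on the compact set $\T^d\times\cP(\T^d)\times\T^d$. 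Indeed, I would use this formula directly. Since $\nabla_y\delta_\mu f$ exists and is continuous, hence bounded by some $L$ on the compact domain, the map $y\mapsto\delta_\mu f(x,\tilde\mu,y)$ has $\|\cdot\|_\infty+\Lip\le C$ uniformly in $(x,\tilde\mu)$. The definition of $\d_\BL$ then gives, for each $\tau$, a bound on the inner integral by $C\,\d_\BL(\mu,\mu')$. Consequently
$$
\|f(\cdot,\mu')-f(\cdot,\mu)\|_\infty\le C\,\d_\BL(\mu,\mu'),
$$
so $f$ is even Lipschitz from $(\cP(\T^d),\d_\BL)$ to $(C(\T^d),\|\cdot\|_\infty)$, which is stronger than uniform continuity.

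As a backup, I would give the purely topological argument from joint continuity alone, in case one prefers not to use the derivative bounds. The space $K=\T^d\times\cP(\T^d)$ is a compact metric space with metric $|x-x'|+\d_\BL(\mu,\mu')$, so $f$ is uniformly continuous on $K$ by Heine--Cantor. Given $\eps>0$, choose $\delta$ such that $|x-x'|+\d_\BL(\mu,\mu')<\delta$ implies $|f(x,\mu)-f(x',\mu')|<\eps$. Specializing to $x=x'$ and taking the supremum over $x$ yields $\|f(\cdot,\mu)-f(\cdot,\mu')\|_\infty\le\eps$ whenever $\d_\BL(\mu,\mu')<\delta$.

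The only real point to check is joint continuity of $f$ on $\T^d\times\cP(\T^d)$, i.e., that the regularity assumption gives continuity in the product topology rather than only separate continuity. I would derive it from the linear-derivative identity above, using the $C^2$ bound in $x$ uniformly in $\mu$ to get Lipschitz continuity in $x$ uniformly in $\mu$. I would also use $\delta_\mu f$ being bounded, which follows from its continuity on a compact set, to get continuity in $\mu$ uniformly in $x$. These two facts together give joint continuity, and the argument above then concludes.
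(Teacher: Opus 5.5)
Your proposal is correct, and your primary route differs from the paper's. The paper's proof is exactly your backup. It takes joint continuity of $f$ on $\T^d\times\cP(\T^d)$ as part of Assumption \ref{asm:coupling} and invokes compactness (Heine--Cantor), which yields only a qualitative modulus $\omega_f$.

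Your primary argument is stronger. You write $f(x,\mu')-f(x,\mu)$ via the linear-derivative identity and use that $\nabla_y\delta_\mu f$ is bounded on the compact set $\T^d\times\cP(\T^d)\times\T^d$. This gives $\|f(\cdot,\mu')-f(\cdot,\mu)\|_\infty\le C\,\d_\BL(\mu,\mu')$, an explicit linear modulus. You do not even need a sup bound on $\delta_\mu f$: constants integrate to zero against $\mu'-\mu$, so you may subtract $\delta_\mu f(x,\tilde\mu,y_0)$. Then $C=L(1+\mathrm{diam}\,\T^d)$ with $L=\sup|\nabla_y\delta_\mu f|$ suffices. The price is that your route uses the derivative hypotheses in Assumption \ref{asm:coupling}(ii), whereas the paper's needs only joint continuity.

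Two small points on your last paragraph:
\begin{itemize}
\item Boundedness of $\delta_\mu f$ alone gives continuity of $\mu\mapsto f(\cdot,\mu)$ only in total variation, not in $\d_\BL$. What gives weak continuity uniformly in $x$ is equicontinuity in $y$, either from the bound on $\nabla_y\delta_\mu f$ or from uniform continuity of $\delta_\mu f$ on the compact product.
\item Continuity in $\mu$ uniformly in $x$ is already continuity of $\mu\mapsto f(\cdot,\mu)$ into $(C(\T^d),\|\cdot\|_\infty)$, so compactness of $\cP(\T^d)$ finishes directly. The detour through joint continuity on the product is therefore redundant.
\end{itemize}
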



\begin{Prop}\label{prop:limit-points-are-solutions}
Any weak limit point of $(\mu_t)_{t\ge0}$ is a stationary equilibrium.
\end{Prop}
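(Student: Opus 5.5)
The plan is to combine the integrability of $\cQ$ (Corollary \ref{cor:Q-integrable}), the asymptotic flatness of the flow (Corollary \ref{cor:mu-shift}), and compactness of the HJB solutions to pass to the limit in the MFG system along shifted time windows.

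Let $\mu^\infty$ be a weak limit point, so $\mu_{t_n}\to\mu^\infty$ for some $t_n\uparrow\infty$. Consider the shifted solutions $(u^n,m^n)(s,x):=(u,m)(t_n+s,x)$ for $s\in[-1,1]$, say. Each $(u^n,m^n)$ solves \eqref{eq:inhomog-MFG-system} on $(-1,1)\times\T^d$.

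\textbf{Step 1: convergence of the measure flows.} By Corollary \ref{cor:mu-shift}, $\sup_{|s|\le1}\d_\BL(\mu_{t_n+s},\mu^\infty)\to0$. Lemma \ref{lem:f-continuous} then gives that $f(\cdot,\mu_{t_n+s})\to f(\cdot,\mu^\infty)$ uniformly in $(s,x)$.

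\textbf{Step 2: compactness of the value functions.} By Lemma \ref{lem:L-infty-bounds}, together with the uniform $C^2$ bound on $f$ and standard parabolic regularity, $u^n$, $\nabla u^n$ and $D^2u^n$ are bounded uniformly and are locally H\"older in $(s,x)$. Arzel\`a--Ascoli then yields a subsequence with $u^n\to \bar u$ in $C^{0,1}$ (even $C^{1,2}$ locally). By stability of viscosity (or classical) solutions, $\bar u$ solves
\[
-\partial_s\bar u+\rho\bar u-\nu\Delta\bar u+\tfrac12|\nabla\bar u|^2=f(x,\mu^\infty).
\]
The same compactness applies to $m^n$, since it solves a Fokker--Planck equation with uniformly bounded smooth drift; the limit is $\bar m\equiv m^\infty$, constant in $s$ by Step 1.

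\textbf{Step 3: identifying the limit as stationary, which is the main obstacle.} I need $\bar u$ to be independent of $s$, or else the limit equation must be shown to force stationarity. The natural tool is $\int_{t_n-1}^{t_n+1}\cQ\to0$. This gives $\int_{-1}^1\!\int|\nabla q^n|^2m^n\,\dx\,\ds\to0$, with $q^n=u^n+\nu\log m^n$. Because $m^n$ has a positive lower bound (Harnack, uniform in $n$) and converges in $C^1$, the limit satisfies $\nabla(\bar u+\nu\log m^\infty)=0$ a.e. Hence $\bar u(s,\cdot)=-\nu\log m^\infty+c(s)$, so $\nabla\bar u$ is time-independent. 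Plugging this into the limit HJB shows $c$ solves $-c'+\rho c=\mathrm{const}$. Then $\tilde u:=-\nu\log m^\infty+c_*$, with $c_*$ the constant equilibrium of that ODE, solves the stationary HJB in \eqref{eq:stationary-MFG-system}. Moreover, $m^\infty\propto e^{-\tilde u/\nu}$ solves the stationary Fokker--Planck equation, since its flux $\nu\nabla m+m\nabla\tilde u$ vanishes. This matches \eqref{eq:stationary-density}, so $\mu^\infty$ is a stationary equilibrium by Lemma \ref{lem:Phi-derivative}.

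The delicate part is justifying the uniform-in-$n$ positive lower bound and the $C^1$ convergence of $m^n$, which lets $\cQ\to0$ pass to $\nabla\bar q=0$. If that proves troublesome, I would instead use weak convergence of $\sqrt{m^n}\nabla q^n\to0$ in $L^2$, which holds because $\sqrt{m^n}\to\sqrt{m^\infty}$ uniformly. The uniqueness consequence stated in Theorem \ref{thm:main} then follows by compactness of $(\cP(\T^d),\d_\BL)$.
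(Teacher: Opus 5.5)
Your argument is correct in outline, but it gets stationarity from a different mechanism than the paper.

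\textbf{The paper's route.} The paper shifts only forward in time and uses the discounted infinite-horizon structure of the HJB. Lemma \ref{lem:stability}, whose Step 1 controls the tail by $2Ce^{-\rho T}/\rho$, gives $\nabla u_n\to\nabla\bar u$ locally uniformly on $[0,\infty)\times\T^d$, where $\bar u$ is \emph{already} the stationary solution for the source $f(\cdot,\bar\mu)$. The stationary Fokker--Planck equation for $\bar\mu$ then follows by integrating the FP equation over $[0,T]$ and using Corollary \ref{cor:mu-shift} to make $\mu_{t_n+T}(\varphi)-\mu_{t_n}(\varphi)$ vanish. In that route $\cQ$ enters only through Corollary \ref{cor:mu-shift}, and $m$ is only ever needed in the sense of weak convergence.

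\textbf{Your route.} On your finite two-sided window the limit HJB is a priori time-dependent. You instead extract stationarity from $\int_{t_n-1}^{t_n+1}\cQ\to0$. This forces the zero-flux relation $\nabla\bar u=-\nu\nabla\log m^\infty$ and reduces the limit HJB to $\delta_\mu\Phi(\mu^\infty,\cdot)\equiv\text{const}$, as in Lemma \ref{lem:Phi-derivative}.

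\textbf{What each buys.} Your version uses the Lyapunov dissipation more directly and shows explicitly that limit points are critical points of $\Phi$. The price is uniform-in-time parabolic regularity that the paper never needs: H\"older second derivatives of $u$, a Harnack lower bound for $m$, and $C^1$ compactness for $m$. The paper's only PDE inputs are Lemma \ref{lem:L-infty-bounds} and the elementary heat-kernel/Gr\"onwall stability lemma.

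\textbf{Dropping the regularity of $m^n$.} You can in fact avoid all regularity of $m^n$. Note that $m^n\nabla q^n=\nu\nabla m^n+m^n\nabla u^n$, and by Cauchy--Schwarz
\[
\int_{-1}^1\int_{\T^d}|m^n\nabla q^n|\,\dx\,\ds\le\sqrt2\,\big(\int_{t_n-1}^{t_n+1}\cQ(r)\,\dr\big)^{1/2}\to0 .
\]
Passing to the limit in distributions then gives $\nu\nabla\mu^\infty+\mu^\infty\nabla\bar u(s,\cdot)=0$ for the weak limit itself. Hence $\mu^\infty=C(s)e^{-\bar u(s,\cdot)/\nu}\,\dx$, and this uses only local uniform convergence of $\nabla u^n$.
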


\begin{proof}
Let $t_n\uparrow\infty$ be any sequence such that $\d_\BL(\mu_{t_n}, \bar \mu)\to0$ for some $\bar \mu\in\cP(\T^d)$.
For $(s,x)\in(0,\infty)\times\T^d$ and $n\ge1$ define 
$$
\psi_n(s,x):=f(x,\mu_{t_n+s}),\quad \bar \psi(x):=f(x,\bar\mu),\quad u_n(s,x):=u(t_n+s,x), \quad m_n(s,x):=m(t_n+s,x).
$$
Since $(u,m)$ satisfy \eqref{eq:inhomog-MFG-system}, $u_n$ satisfies the following dynamic programming equation
$$
-\partial_s u_n+\rho u_n-\nu \Delta u_n + \frac12 |\nabla u_n|^2 = \psi_n(s,x),\qquad (s,x)\in (0,\infty)\times\T^d.
$$
By Lemma \ref{lem:f-continuous}, there exists a modulus of continuity $\omega_f(\cdot)$ such that $\|f(\cdot,\mu)-f(\cdot,\nu)\|_{L^\infty(\T^d)}\leq\omega_f(\d_\BL(\mu,\nu))$. Hence, 
$$
\sup_{s\in[0,T]} \|\psi_n(s,\cdot)-\bar \psi\|_\infty=\sup_{s\in[0,T]}  \|f(\cdot,\mu_{t_n+s})- f(\cdot,\bar\mu)\|_{\infty}\leq \omega_f\Big(\sup_{s\in[0,T]}\d_\BL(\mu_{t_n+s},\bar \mu)\Big)\longrightarrow0,
$$
as $n\uparrow\infty$. Here, the last step follows from the triangle inequality
$$
\sup_{s\in[0,T]}\d_\BL(\mu_{t_n+s},\bar \mu)\leq \sup_{s\in[0,T]}\d_\BL(\mu_{t_n+s},\mu_{t_n})+\d_\BL(\mu_{t_n},\bar\mu),
$$
which goes to zero by Corollary \ref{cor:mu-shift} and the assumption. Hence $\psi_n\to\bar \psi$ locally uniformly as $n\uparrow\infty$. Furthermore,
$
\nabla \psi_n(s,x) = \nabla f(x,\mu_{t_n+s}),
$
which is uniformly bounded in $(s,x)\in[0,\infty)\times\T^d$ and $n\ge1$. Also,
\begin{align*}
\partial_s \psi_n(s,x) &= \int_{\T^d} \delta_\mu f(x,\mu_{t_n+s},y)\partial_s m_n(s,y)\,\d y\\
&=\int_{\T^d} \delta_\mu f(x,\mu_{t_n+s},y)(\nu\Delta m_n(s,y) + \nabla\cdot(m_n\nabla u_n))\,\d y\\
&=  \int_{\T^d} (\nu \Delta_y \delta_\mu f(x,\mu_{t_n+s},y)  -  \nabla_y \delta_\mu f(x,\mu_{t_n+s},y) \cdot \nabla_y u_n)\, m_n(s,y)\d y,
\end{align*}
which is uniformly bounded by Assumption \ref{asm:coupling} and Lemma \ref{lem:L-infty-bounds}. By Lemma \ref{lem:stability}, $\nabla u_n\to\nabla \bar u$ locally uniformly on $[0,\infty)\times\T^d$ where $\bar u$ solves
$$
\rho \bar u - \nu \Delta \bar u + \frac12 |\nabla \bar u|^2 = \bar\psi(x)=f(x,\bar \mu).
$$
It remains to show that $\bar \mu$ is a stationary equilibrium. Let $\varphi\in C^\infty(\T^d)$ and fix $T>0$. Then,
\begin{align*}
\mu_{t_n+T}(\varphi)-\mu_{t_n}(\varphi) &= \int_0^T \frac{\d}{\ds} \mu_{t_n+s}(\varphi)\,\ds\\
&=\int_0^T \int_{\T^d} \varphi(x)\,\partial_s m_n(s,x)\,\dx\,\ds\\
&= \int_0^T \int_{\T^d} \varphi(x)\,(\nu \Delta m_n(s,x) + \nabla\cdot(m_n(s,x)\nabla u_n(s,x)))\,\dx\,\ds\\
&=\int_0^T \int_{\T^d}(\nu\Delta \varphi(x)-\nabla\varphi(x)\cdot\nabla u_n(s,x))\,m_n(s,x)\,\dx\,\ds.
\end{align*}
The left side goes to $0$ by Corollary \ref{cor:mu-shift}. We claim that the right side converges to $T\bar\mu(\nu\Delta\varphi-\nabla\varphi\cdot\nabla\bar u)$. Let 
$$
\eta_n(s,x):=\nu\Delta \varphi(x)-\nabla\varphi(x)\cdot\nabla u_n(s,x),\qquad \bar \eta(x):=\nu\Delta\varphi(x)-\nabla\varphi(x)\cdot\nabla\bar u(x),\qquad (s,x)\in[0,\infty)\times\T^d.
$$
By Lemma \ref{lem:stability}, we have 
$$
\lim_{n\uparrow\infty} \, \|\eta_n-\bar\eta\|_{L^\infty([0,T]\times\T^d)}=0,\qquad \forall \, T>0.
$$
Since $\bar\eta$ is bounded Lipschitz,
$$
\sup_{s\in[0,T]}|\mu_{t_n+s}(\bar\eta)-\bar\mu(\bar\eta)|\leq (\|\bar \eta\|_{L^\infty(\T^d)}+\Lip(\bar\eta))\ \sup_{s\in[0,T]} \d_\BL(\mu_{t_n+s},\bar\mu)\longrightarrow0,
$$
as $n\uparrow\infty$. Therefore,
\begin{align*}
&\Big|\int_0^T \int_{\T^d}\eta_n(s,x)\,m_n(s,x)\,\dx\,\ds - T\bar \mu(\bar \eta)\Big| \leq T\|\eta_n-\bar\eta\|_{L^\infty([0,T]\times\T^d)} + T\sup_{s\in[0,T]}|\mu_{t_n+s}(\bar\eta)-\bar\mu(\bar\eta)| \longrightarrow0,
\end{align*}
as $n\uparrow \infty$.
This shows that, for any $\varphi\in C^\infty(\T^d)$,
$$
\bar\mu(\nu\Delta\varphi-\nabla\varphi\cdot \nabla \bar u) =\bar\mu(\bar\eta)=0,
$$
implying that $(\bar u,\bar\mu)$ is a weak solution to \eqref{eq:stationary-MFG-system}. Since weak solutions are classical, the claim follows.
\end{proof}

\vspace{1em}

We provide the proof of the remaining convergence claim.
\begin{proof}[Proof of Theorem \ref{thm:main} \eqref{eq:main-result-ii}]
Assume there exists a unique stationary equilibrium $\mu^\star$. Since $\cP(\T^d)$ is compact, any subsequence has a further convergent subsequence.
By Proposition \ref{prop:limit-points-are-solutions}, any limiting point is a stationary equilibrium. Since this must be equal to $\mu^\star$, we have $\d_\BL(\mu_t,\mu^\star)\to0$ as $t\uparrow\infty$.
\end{proof}

\section{Uniqueness of stationary equilibria}\label{sec:uniqueness-stationary}

When are stationary equilibria, or equivalently, solutions to \eqref{eq:stationary-MFG-system} unique? We first present general criteria: the well-known Lasry-Lions monotonicity condition and an abstract criterion on the potential $\cF$ recorded in Theorem \ref{thm:main-stationary}. We then specialize to heat flows in Subsection \ref{ssec:heat-flow} and apply our criteria to a class of energy functionals in Subsection \ref{ssec:energy-functionals}. 

\subsection{Lasry-Lions monotonicity}\label{ssec:LL-monotone}

\begin{Def}
{\rm
The interaction $f:\T^d\times\cP(\T^d)\mapsto\R$ is \emph{Lasry-Lions monotone} if, for every $\mu,\nu\in \cP(\T^d)$,
$$
\int_{\T^d} (f(x,\mu)-f(x,\nu))\,(\mu-\nu)(\dx)\geq0.
$$
}
\end{Def}

\begin{Prop}
If $f$ is Lasry-Lions monotone, then there is at most one stationary equilibrium.
\end{Prop}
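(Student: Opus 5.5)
We use the classical Lasry–Lions duality argument, adapted to the discounted stationary system \eqref{eq:stationary-MFG-system}. Suppose $(u_1,m_1)$ and $(u_2,m_2)$ are two classical solutions of \eqref{eq:stationary-MFG-system}, with $\mu_i(\dx)=m_i\,\dx$. By \eqref{eq:stationary-density}, each $m_i$ is strictly positive and smooth. Set $\bar u=u_1-u_2$ and $\bar m=m_1-m_2$. Subtracting the two HJB equations gives
$$
\rho\bar u-\nu\Delta\bar u+\tfrac12|\nabla u_1|^2-\tfrac12|\nabla u_2|^2=f(x,\mu_1)-f(x,\mu_2).
$$
Subtracting the two Fokker–Planck equations gives $-\nu\Delta\bar m-\nabla\cdot(m_1\nabla u_1-m_2\nabla u_2)=0$.

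Next, multiply the HJB difference by $\bar m$ and integrate over $\T^d$. Then test the FP difference against $\bar u$ and integrate by parts. The FP identity yields
$$
\nu\int\nabla\bar m\cdot\nabla\bar u\,\dx+\int(m_1\nabla u_1-m_2\nabla u_2)\cdot\nabla\bar u\,\dx=0.
$$
In the HJB identity, integrating by parts gives $-\nu\int\Delta\bar u\,\bar m=\nu\int\nabla\bar u\cdot\nabla\bar m$. We substitute the FP relation to eliminate the $\nu$-terms, which leaves
$$
\rho\int\bar u\,\bar m\,\dx-\int(m_1\nabla u_1-m_2\nabla u_2)\cdot\nabla\bar u\,\dx+\tfrac12\int(|\nabla u_1|^2-|\nabla u_2|^2)\bar m\,\dx=\int(f(x,\mu_1)-f(x,\mu_2))\,\bar m\,\dx.
$$
The usual algebra then identifies the gradient terms as $-\tfrac12\int(m_1+m_2)|\nabla\bar u|^2\,\dx$.

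The step that differs from the finite-horizon case is the discount term $\rho\int\bar u\,\bar m$, which has no sign a priori. To handle it, we use the explicit structure \eqref{eq:stationary-density}, $u_i=-\nu\log m_i-\nu\log Z_i$. Since $\int\bar m\,\dx=0$, the constants drop out and
$$
\rho\int\bar u\,\bar m\,\dx=-\rho\nu\int(\log m_1-\log m_2)(m_1-m_2)\,\dx\le0,
$$
with equality only if $m_1=m_2$. The identity therefore becomes
$$
-\rho\nu\int(\log m_1-\log m_2)(m_1-m_2)\,\dx-\tfrac12\int(m_1+m_2)|\nabla\bar u|^2\,\dx=\int(f(\cdot,\mu_1)-f(\cdot,\mu_2))\,\d(\mu_1-\mu_2).
$$
The left side is $\le0$ and the right side is $\ge0$ by Lasry–Lions monotonicity. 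Hence both sides vanish, so $m_1=m_2$ by strict monotonicity of $\log$. Since $m_1=m_2$, the two HJB equations have the same right-hand side $f(\cdot,\mu_1)$. The discounted HJB equation has a unique solution by comparison, so $u_1=u_2$, and the stationary equilibrium is unique.

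I expect the main obstacle to be bookkeeping the signs of the quadratic cross terms. In particular, one must check that $\int(m_1\nabla u_1-m_2\nabla u_2)\cdot\nabla\bar u$ combined with $\tfrac12\int(|\nabla u_1|^2-|\nabla u_2|^2)\bar m$ indeed produces $-\tfrac12\int(m_1+m_2)|\nabla\bar u|^2$. If that algebra does not close directly, a fallback is available. Because $\nabla u_i=-\nu\nabla\log m_i$, the gradient terms can be rewritten entirely in terms of $\log m_i$, and one can argue directly via the monotonicity of $\delta_\mu\Phi$ from Lemma \ref{lem:Phi-derivative}.
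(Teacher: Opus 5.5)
Your proposal is correct and follows essentially the same route as the paper: you subtract the two systems, test the HJB difference against $m_1-m_2$ and the Fokker--Planck difference against $u_1-u_2$, and handle the discount term via \eqref{eq:stationary-density}. The gradient terms combine through the pointwise identity $-(m_1\nabla u_1-m_2\nabla u_2)\cdot\nabla\bar u+\tfrac12(|\nabla u_1|^2-|\nabla u_2|^2)\bar m=-\tfrac12(m_1+m_2)|\nabla\bar u|^2$, which closes directly, so the fallback is not needed; your extra comparison step giving $u_1=u_2$ is harmless but unnecessary, since a stationary equilibrium is the measure $\mu$ itself.
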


The proof appears to be standard although we were not able to find a precise reference, so we include it for completeness.

\begin{proof}
Let $(u_i,m_i)$, $i=1,2$, be two stationary solutions and set $\mu_i(\dx)=m_i(x)\,\dx$. We claim that
\begin{equation}\label{eq:ll-duality}
\begin{aligned}
\int_{\T^d} (f(x,\mu_1)-f(x,\mu_2))\,(\mu_1-\mu_2)(\dx) =&-\rho\nu\int_{\T^d} (\log(m_1)-\log(m_2)) (m_1-m_2)\,\d x\\
& -\frac12 \int_{\T^d}(m_1+m_2)|\nabla u_1-\nabla u_2|^2\,\dx
\end{aligned}
\end{equation}
Since $\log(\cdot)$ is strictly increasing, this is a strictly negative quantity if $\mu_1\neq \mu_2$. Then, under Lasry-Lions monotonicity, we must have $\mu_1=\mu_2$. Set $w=u_1-u_2$ and $z=m_1-m_2$. Subtracting the stationary Fokker-Planck equations and multiplying by $w$ gives, after integration by parts,
\begin{equation}\label{eq:ll-proof-I}
\nu \int_{\T^d}\nabla z\cdot \nabla w\,\dx + \int_{\T^d} (m_1\nabla u_1 - m_2 \nabla u_2)\cdot \nabla w\,\dx=0.
\end{equation}
At the same time, subtracting the stationary HJB equations and multiplying by $z$,
\begin{equation}\label{eq:ll-proof-II}
\int_{\T^d} (f(\cdot,\mu_1)-f(\cdot,\mu_2))z\,\dx = \rho \int_{\T^d} wz\,\dx + \nu \int_{\T^d} \nabla z\cdot \nabla w\,\dx +\frac12 \int_{\T^d} z(|\nabla u_1|^2 - |\nabla u_2|^2)\,\dx.
\end{equation}
Combining \eqref{eq:ll-proof-I} and \eqref{eq:ll-proof-II},
\begin{align*}
\int_{\T^d} (f(\cdot,\mu_1)-f(\cdot,\mu_2))z\,\dx &=\rho \int_{\T^d} wz\,\dx  - \int_{\T^d} (m_1\nabla u_1 - m_2 \nabla u_2)\cdot \nabla w\,\dx+ \frac12 \int_{\T^d} (|\nabla u_1|^2 - |\nabla u_2|^2)z \,\dx.
\end{align*}
Using 
$$
 -  (m_1\nabla u_1 - m_2 \nabla u_2)\cdot \nabla w +\frac12 (|\nabla u_1|^2 - |\nabla u_2|^2)z = -\frac12 (m_1+m_2)|\nabla w|^2
$$
we obtain
\begin{align*}
\int_{\T^d} (f(\cdot,\mu_1)-f(\cdot,\mu_2))z\,\dx &=\rho \int_{\T^d} wz\,\dx -\frac12 \int_{\T^d} (m_1+m_2)|\nabla w|^2\,\dx.
\end{align*}
By \eqref{eq:stationary-density},
$$
m_i(x)=\frac{1}{Z_i} \exp(-u_i(x)/\nu),\qquad Z_i:=\int_{\T^d} \exp(-u_i(y)/\nu)\,\d y,\qquad i=1,2,
$$
so that, by taking logarithms,
$$
\int_{\T^d}wz\,\dx = -\nu\int_{\T^d} (\log(m_1)-\log(m_2)) (m_1-m_2)\,\d x.
$$
This completes the proof.
\end{proof}

\subsection{Abstract flow criterion}\label{ssec:abstract-flow}

This section establishes Theorem \ref{thm:main-stationary}. From Lemma \ref{lem:Phi-derivative}, we know that $\mu\in\cP^2_+(\T^d)$ is a stationary equilibrium if and only if $\delta_\mu\Phi(\mu,\cdot)\equiv\text{constant}$. Let $V$ be the derivative of $S_t$ at time $0$, see Theorem \ref{thm:main-stationary}. Assume $\mu$ is a stationary equilibrium. By our regularity assumptions on $(S_t)_{t\ge0}$ and $V$ and the chain rule,
$$
\frac{\d}{\dt}\Big|_{t=0} \Phi(S_t\mu)=\int_{\T^d} \delta_\mu \Phi(\mu,x)\, (V\mu)(x)\,\d x =\text{constant } \cdot \int_{\T^d} (V\mu)(x)\,\d x.
$$
But clearly, 
$$
\int_{\T^d} (V\mu)(x)\,\dx = \lim_{h\downarrow0} \frac{1}{h} \int_{\T^d} (S_t\mu-\mu)(x)\,\dx=0.
$$
This proves Theorem \ref{thm:main-stationary}.

\subsection{Heat flow criterion}\label{ssec:heat-flow}

We now specialize to the heat flow $S_t=e^{t\Delta}$ and establish Corollary \ref{cor:heat-flow}. Define the constant
\begin{equation}\label{eq:critical-kappa}
\kappa_c:=2\nu(\rho+\nu).    
\end{equation}

\begin{Lem}\label{lem:torus-inequality}
For any smooth strictly positive function $g:\T\mapsto\R$, 
$$
\int_\T |(\log g)'|^2 g\,\dx \leq \int_\T |(\log g)''|^2 g\,\dx.
$$
\end{Lem}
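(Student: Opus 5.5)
The idea is to rewrite both sides in terms of $\phi:=\sqrt{g}$. The inequality then reduces to Wirtinger's inequality for the mean-zero periodic function $\phi'$, after one integration by parts that produces a term of favourable sign.

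\textbf{Step 1: substitution.} Set $\phi:=\sqrt g>0$, which is smooth and $2\pi$-periodic. Then $(\log g)'=2\phi'/\phi$, so the left-hand side becomes
$$
\int_\T |(\log g)'|^2 g\,\dx=4\int_\T (\phi')^2\,\dx .
$$
Differentiating once more gives $(\log g)''=2\big(\phi''/\phi-(\phi')^2/\phi^2\big)$. Hence the right-hand side is
$$
\int_\T |(\log g)''|^2g\,\dx=4\int_\T\Big(\phi''-\frac{(\phi')^2}{\phi}\Big)^2\dx .
$$
After dividing by $4$, it suffices to show $\int_\T(\phi')^2\,\dx\le\int_\T\big(\phi''-(\phi')^2/\phi\big)^2\dx$.

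\textbf{Step 2: expand and integrate the cross term by parts.} Expanding the square yields three terms:
$$
\int_\T (\phi'')^2\,\dx-2\int_\T\frac{\phi''(\phi')^2}{\phi}\,\dx+\int_\T\frac{(\phi')^4}{\phi^2}\,\dx .
$$
For the cross term, write $\phi''(\phi')^2=\tfrac13\big((\phi')^3\big)'$ and integrate by parts on $\T$; there are no boundary terms by periodicity. This gives
$$
\int_\T\frac{\phi''(\phi')^2}{\phi}\,\dx=\frac13\int_\T\frac{(\phi')^4}{\phi^2}\,\dx .
$$
Therefore the right-hand side equals $\int_\T(\phi'')^2\,\dx+\tfrac13\int_\T(\phi')^4/\phi^2\,\dx$, which is at least $\int_\T(\phi'')^2\,\dx$.

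\textbf{Step 3: Wirtinger's inequality.} Since $\phi'$ is $2\pi$-periodic with $\int_\T\phi'\,\dx=0$, Wirtinger's (Poincaré's) inequality on $\T=\R/2\pi\Z$ applies with optimal constant $1$. Expanding $\phi'$ in a Fourier series with no zero mode, this reads $\int_\T(\phi')^2\,\dx\le\int_\T(\phi'')^2\,\dx$. Combined with Step 2, this gives the claim.

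\textbf{Where the difficulty lies.} The only real obstacle is choosing the right variable. Working directly with $v=(\log g)'$ only gives the weighted Poincaré inequality for the measure $g\,\dx$ (note $\int_\T vg\,\dx=0$). That inequality has a $g$-dependent constant that is not bounded by $1$ in general. Passing to $\sqrt g$ turns the weight into a flat one, and the nonlinear term $(\phi')^2/\phi$ contributes with the favourable sign. Smoothness and strict positivity of $g$ justify all the integrations by parts.
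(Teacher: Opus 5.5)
Your proof is correct and matches the paper's proof essentially step for step. Both substitute $\sqrt{g}$, integrate the cross term by parts to get $\frac13\int_\T (\phi')^4/\phi^2\,\dx$, drop that nonnegative term, and finish with the Poincar\'e (Wirtinger) inequality for the mean-zero function $\phi'$.
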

The simple proof is presented in Appendix \ref{app:torus-inquality}. The next lemma in particular proves Corollary \ref{cor:heat-flow}.

\begin{Lem}\label{lem:heat-flow-identities}
Fix $\mu\in\cP^2_+(\T^d)$, and let $(\eta^\mu_t)_{t\ge0}$ satisfy the heat equation $\partial_t\eta^\mu_t=\Delta\eta^\mu_t$ with $\eta^\mu_0=\mu$. Then,
\begin{equation}\label{eq:heat-flow-identities-I}
\frac{\d}{\dt}\Ent(\eta^\mu_t)=-I(\eta^\mu_t)
\end{equation}
and 
\begin{equation}\label{eq:heat-flow-identities-II}
\frac{\d}{\dt} I(\eta^\mu_t)\le -2 I(\eta^\mu_t).
\end{equation}
In particular, if $\mu$ is a stationary equilibrium, then
\begin{equation}\label{eq:heat-flow-identities-III}
\frac{\d}{\dt}\Big|_{t=0} \cF(\eta^\mu_t)\geq \frac{\kappa_c}{2} I(\mu).
\end{equation}
Here, $\kappa_c$ is as in \eqref{eq:critical-kappa}.
\end{Lem}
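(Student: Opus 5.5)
Write $\eta_t:=\eta^\mu_t$ and $h_t:=\log\eta_t$. Positivity and smoothness of $\eta_t$ for $t\ge0$ follow from $\mu\in\cP^2_+(\T^d)$ and the heat semigroup, so all integrations by parts below are justified.

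For \eqref{eq:heat-flow-identities-I}, I differentiate under the integral: $\frac{\d}{\dt}\int\eta\log\eta=\int(\log\eta+1)\Delta\eta=-\int|\nabla\eta|^2/\eta=-I(\eta_t)$, using $\int\Delta\eta=0$.

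For \eqref{eq:heat-flow-identities-II}, I use the classical Bakry--Émery computation on the flat torus. Writing $I(\eta)=\int|\nabla h|^2\eta$ and using $\partial_t h=\Delta h+|\nabla h|^2$, one gets
\[
\frac{\d}{\dt}I(\eta_t)=-2\int\|\nabla^2 h\|^2\eta\,\dx .
\]
The Ricci term vanishes on $\T^d$. I would derive this via $\int 2\nabla h\cdot\nabla(\partial_t h)\eta+\int|\nabla h|^2\Delta\eta$ and the Bochner identity $\tfrac12\Delta|\nabla h|^2=\|\nabla^2h\|^2+\nabla h\cdot\nabla\Delta h$. It then remains to show
\[
\int\|\nabla^2h\|^2\eta\ \ge\ \int|\nabla h|^2\eta .
\]
This is the main obstacle, since Lemma \ref{lem:torus-inequality} is one-dimensional. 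I would first bound $\|\nabla^2h\|^2\ge\sum_i(\partial_{ii}h)^2$. I then aim to apply Lemma \ref{lem:torus-inequality} coordinatewise: for fixed values of the other coordinates $x_{-i}$, apply it to $g=\eta(\cdot,x_{-i})$ as a function of $x_i$. Since $\partial_i\log g=\partial_i h$ and $\partial_i^2\log g=\partial_{ii}h$, this gives $\int_\T(\partial_ih)^2\eta\,\d x_i\le\int_\T(\partial_{ii}h)^2\eta\,\d x_i$. Integrating over $x_{-i}$ and summing over $i$ yields the claim, and hence \eqref{eq:heat-flow-identities-II}. The lemma does not require $g$ to be normalized, which is what makes the slicing legitimate.

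For \eqref{eq:heat-flow-identities-III}, I apply Theorem \ref{thm:main-stationary} with $S_t=e^{t\Delta}$, whose generator is $V\mu=\Delta m$; admissibility holds for $m\in C^2$. This gives
\[
0=\frac{\d}{\dt}\Big|_{t=0}\Phi(\eta_t)=\rho\nu\frac{\d}{\dt}\Big|_{0}\Ent+\frac{\nu^2}{2}\frac{\d}{\dt}\Big|_{0}I+\frac{\d}{\dt}\Big|_{0}\cF(\eta_t).
\]
Inserting \eqref{eq:heat-flow-identities-I} and \eqref{eq:heat-flow-identities-II} at $t=0$, where only the inequality $\frac{\d}{\dt}I\le-2I$ is needed, gives
\[
\frac{\d}{\dt}\Big|_0\cF(\eta_t)\ \ge\ \rho\nu I(\mu)+\nu^2I(\mu)=\tfrac{\kappa_c}{2}I(\mu).
\]
A technical point is that the one-sided derivatives at $t=0$ of $\Ent$ and $I$ exist and equal the formulas above. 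This holds because $m\in C^2$ and the heat flow is $C^2$-continuous at $0$ (only second derivatives of $h$ appear after one integration by parts). If needed, I would instead derive the inequality for $t>0$ and pass to $t\downarrow0$.
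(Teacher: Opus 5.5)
Your proposal is correct and follows the paper's proof essentially step for step. The steps are de Bruijn's identity for the entropy, then the formula $\frac{\d}{\dt}I(\eta_t)=-2\int\|\nabla^2\log\eta_t\|^2\eta_t\,\dx$ combined with the diagonal lower bound and a slice-wise application of Lemma~\ref{lem:torus-inequality}, and finally Theorem~\ref{thm:main-stationary} applied to the heat flow. Your Bochner-based derivation of that formula (which the paper cites from Bakry et al.) and your remarks on one-sided derivatives at $t=0$ for $C^2$ data add detail the paper leaves implicit, but do not change the argument.
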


\begin{proof}
\emph{Proof of \eqref{eq:heat-flow-identities-I}.} This is a consequence of integration by parts, and it is also known as \emph{de Bruijn's identity}. For a proof, see \cite[Proposition 5.2.2]{bakry} for example.

\vspace{1em}
\emph{Proof of \eqref{eq:heat-flow-identities-II}.} Let us write $\eta=\eta^\mu$. It is classical (see p.~269 in \cite{bakry}) that
$$
\frac{\d}{\dt} I(\eta_t) = -2 \int_{\T^d}|D^2 \log(\eta_t)|_{\mathrm{HS}}^2\,\eta_t\,\d x,
$$
where $|\cdot|_{\mathrm{HS}}$ is the Hilbert-Schmidt norm and $D^2$ denotes the Hessian. Applying  Lemma \ref{lem:torus-inequality} to each coordinate, and integrating over the remaining coordinates, yields
$$
\int_{\T^d} |\nabla \log(\eta_t)|^2\,\eta_t\,\dx\leq \int_{\T^d} \sum_{i=1}^d \Big| \frac{\partial^2}{\partial x_i^2}\, \log(\eta_t)\Big|^2 \,\eta_t\, \dx.
$$
Hence,
\begin{align*}
\frac{\d}{\dt} I(\eta_t) &= -2 \int_{\T^d}|D^2 \log(\eta_t)|_{\mathrm{HS}}^2\, \eta_t\,\d x\\
&\leq -2\int_{\T^d} \sum_{i=1}^d \Big| \frac{\partial^2}{\partial x_i^2}\, \log(\eta_t)\Big|^2 \,\eta_t\, \dx \\
&\leq -2\int_{\T^d} |\nabla \log(\eta_t)|^2\,\eta_t\,\dx=-2I(\eta_t).
\end{align*}

\emph{Proof of \eqref{eq:heat-flow-identities-III}.} 
Assume $\mu$ is a stationary equilibrium. By Theorem \ref{thm:main-stationary},
\begin{align*}
0&= \frac{\d}{\dt}\Big|_{t=0}  \Phi(\eta_t)\\
&=\rho \nu \frac{\d}{\dt}\Big|_{t=0} \Ent(\eta_t) + \frac{\nu^2}{2} \frac{\d}{\dt}\Big|_{t=0} I(\eta_t) + \frac{\d}{\dt}\Big|_{t=0} \cF(\eta_t) \\
&\leq -\rho \nu I(\mu) - \nu^2 I(\mu) +\frac{\d}{\dt}\Big|_{t=0} \cF(\eta_t),
\end{align*}
hence
$$
\frac{\d}{\dt}\Big|_{t=0} \cF(\eta^\mu_t) \geq \nu(\rho+\nu) I(\mu) = \frac{\kappa_c}{2} I(\mu)
$$
by definition \eqref{eq:critical-kappa} of $\kappa_c$, showing the claim.
\end{proof}

\subsection{Uniqueness for a class of energy functionals}\label{ssec:energy-functionals}
We now specialize to potentials $\cF$ of the form
\begin{equation}\label{eq:F-kernel}
\cF(\mu):= \iint_{\T^d\times\T^d}\psi(x-y)\,\mu(\dx)\mu(\d y),\qquad \mu\in\cP(\T^d),
\end{equation}
for a smooth even function $\psi:\T^d\mapsto\R$, so that the interaction cost $f(x,\mu)$ of the MFG is given by, up to $\mu$-dependent constants,
\begin{equation}\label{eq:f-kernel}
f(x,\mu):=2 \ \int_{\T^d}\psi(x-y)\mu(\d y),\qquad (x,\mu)\in\T^d\times\cP(\T^d).
\end{equation}
Observe that the uniform distribution $\U(\dx)=(2\pi)^{-d}\,\dx$ on the torus is a stationary solution to the MFG. Indeed, by translation-invariance of $\U$, 
$$
f(x,\U) = \frac{2}{(2\pi)^d}\int_{\T^d} \psi(y)\,\d y
$$
is independent of $x$, so that the optimal control is $\alpha^\star\equiv0$. Started in the uniform distribution, the optimal state remains uniform since it is a Brownian motion on $\T^d$. We now provide criteria under which $\U$ is the \emph{unique} stationary equilibrium.

Since $\psi$ is smooth and even, it is represented by its Fourier series
$$
\psi(x) = c_0 + \frac12 \sum_{k\in\Z^d\setminus\{0\}}c_k\cos(k\cdot x)
$$
for coefficients $c_k\in\R$ satisfying $c_k=c_{-k}$.
For $\mu\in\cP(\T^d)$ and $k\in \Z^d\setminus\{0\}$, set
\begin{equation}\label{eq:cos-sin-applications}
a_k(\mu)=\int_{\T^d} \cos(k\cdot x)\,\mu(\dx),\qquad b_k(\mu) := \int_{\T^d} \sin(k\cdot x)\,\mu(\dx).
\end{equation}
Then,
\begin{align*}
\iint_{\T^d\times\T^d} \cos(k\cdot (x-y))\,\mu(\dx)\mu(\dy)  &= \iint_{\T^d\times\T^d}[\cos(k\cdot x)\cos(k\cdot y) + \sin(k\cdot x)\sin(k\cdot y)]\,\mu(\dx)\mu(\dy)\\
&= a_k(\mu)^2+b_k(\mu)^2.
\end{align*}
Hence,
\begin{equation}\label{eq:F-kernel-Fourier}
\cF(\mu)= c_0 + \frac12 \sum_{k\neq0} c_k q_k(\mu),\qquad \text{where}\qquad q_k(\mu):=a_k(\mu)^2+b_k(\mu)^2,
\end{equation}
and by differentiating, a linear derivative is given by
\begin{equation}
\delta_\mu F(\mu,x)=   \sum_{k\neq0}c_k[a_k(\mu)\cos(k\cdot x)+b_k(\mu)\sin(k\cdot x)].
\end{equation}

We next characterize Lasry-Lions monotonicity using the Fourier expansion.

\begin{Prop}\label{prop:LL}
The coupling $f$ is Lasry-Lions monotone if and only if $c_k\ge 0$ for all $k\in\Z^d\setminus\{0\}$. In this case, $\U$ is the unique stationary equilibrium.
\end{Prop}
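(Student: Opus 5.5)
The argument has two parts: a Fourier characterization of monotonicity, and uniqueness from the earlier Lasry--Lions proposition.

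\textbf{Fourier expression for the monotonicity pairing.} Fix $\mu,\nu\in\cP(\T^d)$ and set $\sigma:=\mu-\nu$, a signed measure of total mass zero. Since $f(x,\mu)$ is linear in $\mu$ via \eqref{eq:f-kernel}, we have $f(\cdot,\mu)-f(\cdot,\nu)=2\int\psi(\cdot-y)\,\sigma(\dy)$. Any $\mu$-dependent additive constant in $f$ integrates to zero against $\sigma$, because $\sigma(\T^d)=0$. Therefore
$$
\int_{\T^d}(f(x,\mu)-f(x,\nu))\,\sigma(\dx)=2\iint\psi(x-y)\,\sigma(\dx)\sigma(\dy).
$$
I then insert the Fourier series of $\psi$. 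The constant $c_0$ drops out, again since $\sigma(\T^d)=0$. The cosine identity used for \eqref{eq:F-kernel-Fourier} is bilinear, so it extends verbatim to signed measures. With $a_k,b_k$ extended linearly to $\sigma$, this gives
$$
\int(f(x,\mu)-f(x,\nu))\,\sigma(\dx)=\sum_{k\neq0}c_k\big(a_k(\sigma)^2+b_k(\sigma)^2\big).
$$
Smoothness of $\psi$ makes the coefficients $c_k$ absolutely summable, which justifies exchanging sum and integral.

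\textbf{The equivalence.} If $c_k\ge0$ for all $k\neq0$, every term is nonnegative, so $f$ is Lasry--Lions monotone. For the converse, fix $k_0\neq0$ and take $\mu=\U$ and $\nu(\dx)=(2\pi)^{-d}(1+\epsilon\cos(k_0\cdot x))\,\dx$ with $0<\epsilon<1$. By orthogonality, $a_k(\sigma)\neq0$ only for $k=\pm k_0$, and $b_k(\sigma)=0$ for all $k$. Hence the pairing equals $2c_{k_0}a_{k_0}(\sigma)^2$ with $a_{k_0}(\sigma)\neq0$ (the factor $2$ comes from $\pm k_0$ and $c_{k_0}=c_{-k_0}$). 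Monotonicity then forces $c_{k_0}\ge0$.

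\textbf{Uniqueness.} When $f$ is Lasry--Lions monotone, the earlier proposition in Subsection~\ref{ssec:LL-monotone} gives at most one stationary equilibrium. We showed above that $\U$ is a stationary equilibrium, so it is the unique one. That proposition is stated for smooth stationary solutions, which is harmless here because all stationary solutions are classical.

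The only delicate step is making sure the constants $c_0$ and the $\mu$-dependent normalization of $f$ genuinely drop out. Both are handled by the zero total mass of $\sigma$, so I expect no real obstacle.
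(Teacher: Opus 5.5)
Your proposal is correct and is essentially the paper's proof. It uses the same Fourier identity, writing the monotonicity pairing as $\sum_{k\neq0}c_k\big([a_k(\mu_1)-a_k(\mu_2)]^2+[b_k(\mu_1)-b_k(\mu_2)]^2\big)$, and the same single-mode cosine perturbation to force $c_k\ge0$: you compare $\U$ with $(2\pi)^{-d}(1+\epsilon\cos(k_0\cdot x))\,\dx$ where the paper uses the symmetric pair $\mu_\pm$, an immaterial difference. Uniqueness then follows, as in the paper, from the earlier Lasry--Lions proposition together with the observation that $\U$ is stationary.
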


\begin{proof}
Using the Fourier expansion of $f(x,\mu_1)$ and $f(x,\mu_2)$, we readily obtain
$$
\int_{\T^d} (f(x,\mu_1)-f(x,\mu_2))\,(\mu_1-\mu_2)(\dx) =  \sum_{k\neq0} c_k\Big([a_k(\mu_1)-a_k(\mu_2)]^2 + [b_k(\mu_1)-b_k(\mu_2)]^2 \Big).
$$
If $c_k\geq0$ for all $k\neq0$, then this is clearly non-negative. Conversely, if $f$ is Lasry-Lions monotone, we claim that $c_k\geq0$ for every $k\in\Z^d\setminus\{0\}$. Indeed, fix $k\neq0$ and, for sufficiently small $\eps>0$, define two probability measures on $\T^d$ by 
$$
\mu_\pm(\dx) := \frac{1}{(2\pi)^d}\Big(1\pm \sqrt{\frac{\eps}{2}}\cos(k\cdot x)\Big)\,\dx.
$$
Then,
$$
a_\ell (\mu_+) - a_\ell(\mu_-) = \sqrt{\frac{\eps}{2}}\, \mathds{1}_{\ell\in\{-k,k\}},\qquad b_\ell(\mu_+)-b_\ell(\mu_-)=0,\qquad \ell\in \Z^d\setminus\{0\}.
$$
Therefore,
\begin{equation*}
0\leq \int_{\T^d} (f(x,\mu_+)-f(x,\mu_-))(\mu_+-\mu_-)(\dx) = \eps \, c_k.\qedhere
\end{equation*}
\end{proof}

\begin{Prop}
Let $(\eta_t)_{t\ge0}$ solve the heat equation $\partial_t\eta_t=\Delta\eta_t$ starting from some $\eta_0\in\cP^2_+(\T^d)$. Then,
\begin{equation}\label{eq:derivative-F-Fourier}
\frac{\d}{\dt} \cF(\eta_t) = -   \sum_{k\in\Z^d\setminus\{0\}}c_k |k|^2 q_k(\eta_t),\qquad t\ge0.
\end{equation}
Consequently, if
\begin{equation}\label{eq:Lambda}
\Lambda(\psi):=\sup\Big\{\frac{-1}{I(\mu)} \sum_{k\in\Z^d\setminus\{0\}} c_k|k|^2  q_k(\mu)\,\big|\,\mu\in\cP^2_+(\T^d)\setminus\{\U\}\Big\} < \frac{\kappa_c}{2},
\end{equation}
then the uniform distribution $\U$ is the unique stationary equilibrium.
\end{Prop}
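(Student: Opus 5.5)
The plan is to compute the time derivative of $\cF(\eta_t)$ mode by mode using the Fourier representation \eqref{eq:F-kernel-Fourier}, and then combine it with Lemma \ref{lem:heat-flow-identities} (equivalently Corollary \ref{cor:heat-flow}).

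\emph{Step 1: the derivative formula \eqref{eq:derivative-F-Fourier}.} For each $k\neq0$, test the heat equation against $\cos(k\cdot x)$ and $\sin(k\cdot x)$ and integrate by parts twice on $\T^d$. Since $\Delta\cos(k\cdot x)=-|k|^2\cos(k\cdot x)$, and likewise for sine, this gives
\[
\frac{\d}{\dt}a_k(\eta_t)=-|k|^2a_k(\eta_t),\qquad \frac{\d}{\dt}b_k(\eta_t)=-|k|^2 b_k(\eta_t).
\]
Hence $\frac{\d}{\dt}q_k(\eta_t)=-2|k|^2q_k(\eta_t)$. Differentiating \eqref{eq:F-kernel-Fourier} term by term then yields \eqref{eq:derivative-F-Fourier}; the factor $\tfrac12$ in front of the sum cancels the factor $2$.

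To justify the termwise differentiation, note that $\psi$ is smooth, so $|c_k||k|^2$ is summable (indeed $c_k$ decays faster than any polynomial). Moreover $q_k(\eta_t)\le 1$ uniformly in $t$. The differentiated series therefore converges uniformly in $t$, and the standard theorem on differentiating series applies. The same argument covers the right derivative at $t=0$, because $\eta_0\in\cP^2_+(\T^d)$ and $a_k(\eta_t),b_k(\eta_t)$ are explicit exponentials in $t$.

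\emph{Step 2: uniqueness.} Suppose $\mu\in\cP(\T^d)$ is a stationary equilibrium with $\mu\neq\U$. The stationary solutions are classical and of the form \eqref{eq:stationary-density}, with $u$ a $C^2$ classical solution. Hence $m$ is strictly positive and $C^2$, so $\mu\in\cP^2_+(\T^d)$, and Corollary \ref{cor:heat-flow} (i.e.\ \eqref{eq:heat-flow-identities-III}) applies. Combining it with Step 1 at $t=0$ gives
\[
-\sum_{k\neq0}c_k|k|^2q_k(\mu)\;\ge\;\frac{\kappa_c}{2}\,I(\mu).
\]
Since $\mu\neq\U$ and $m$ is a smooth density, $m$ is non-constant, so $I(\mu)>0$. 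Dividing by $I(\mu)$ gives $\Lambda(\psi)\ge \kappa_c/2$, contradicting \eqref{eq:Lambda}. Therefore every stationary equilibrium equals $\U$. That $\U$ is itself a stationary equilibrium was already observed above.

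The main obstacle is the regularity bookkeeping: checking that a stationary equilibrium lies in $\cP^2_+(\T^d)$, so that the heat-flow criterion may be invoked, and justifying termwise differentiation of the Fourier series. Both follow from the classical regularity of \eqref{eq:stationary-MFG-system} and the smoothness of $\psi$. The algebraic core is simply that $\cos(k\cdot x)$ and $\sin(k\cdot x)$ are eigenfunctions of $\Delta$.
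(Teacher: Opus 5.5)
Your proposal is correct and follows essentially the same route as the paper. You derive $\frac{\d}{\dt}a_k(\eta_t)=-|k|^2a_k(\eta_t)$ and $\frac{\d}{\dt}b_k(\eta_t)=-|k|^2b_k(\eta_t)$ by testing the heat equation against $\cos(k\cdot x)$ and $\sin(k\cdot x)$, differentiate the Fourier representation \eqref{eq:F-kernel-Fourier}, and combine the result with Corollary \ref{cor:heat-flow} at $t=0$. Your extra bookkeeping fills in details the paper leaves implicit: termwise differentiation justified by summability of $|c_k||k|^2$, membership of stationary equilibria in $\cP^2_+(\T^d)$ via \eqref{eq:stationary-density}, and $I(\mu)>0$ for $\mu\neq\U$.
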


\begin{proof}
We first prove \eqref{eq:derivative-F-Fourier}. Recall $a_k,b_k$ from \eqref{eq:cos-sin-applications}. We compute
\begin{align*}
\partial_t a_k(\eta_t) &=  \int_{\T^d} \cos(k\cdot x)\,\partial_t\eta_t(x)\,\dx\\
&=\int_{\T^d} \cos(k\cdot x)\,\Delta \eta_t(x)\,\dx\\
&= - |k|^2 \int_{\T^d} \cos(k\cdot x)\, \eta_t(x)\,\dx \qquad \text{(integration by parts)}\\
&= - |k|^2 a_k(\eta_t),
\end{align*}
and, similarly, $\partial_tb_k(\eta_t)=-|k|^2b_k(\eta_t)$.  Using the representation \eqref{eq:F-kernel-Fourier}, 
\begin{equation*}
\frac{\d}{\dt} \cF(\eta_t) = \frac12 \sum_{k\neq0} c_k \partial_t (a_k(\eta_t)^2+b_k(\eta_t)^2) =  -\sum_{k\neq0} c_k |k|^2 (a_k(\eta_t)^2+b_k(\eta_t)^2)= -\sum_{k\neq0} c_k |k|^2 q_k(\eta_t).
\end{equation*}
Combining this with Corollary \ref{cor:heat-flow},
$$
-\sum_{k\neq0} c_k |k|^2 q_k(\eta_0)= \frac{\d}{\dt}\Big|_{t=0} \cF(\eta_t) \geq \frac{\kappa_c}{2}I(\eta_0)
$$
whenever $\eta_0$ is a stationary equilibrium. This implies the claim.
\end{proof}

The proof of the next lemma is given in Appendix \ref{app:fisher-info-lower-bound}.

\begin{Lem}\label{lem:fisher-info-lower-bound}
For any $k\in\Z^d\setminus\{0\}$, 
$$
\sup_{\mu\in\cP^2_+(\T^d)\setminus\{\U\}} \, \frac{2|k|^2q_k(\mu)}{I(\mu)}=1.
$$
\end{Lem}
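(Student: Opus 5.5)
We must show $\sup_{\mu\neq\U} 2|k|^2 q_k(\mu)/I(\mu)=1$, which amounts to two facts. The upper bound is $2|k|^2 q_k(\mu)\le I(\mu)$ for every $\mu\in\cP^2_+(\T^d)$. Optimality is witnessed by a family of near-uniform densities along which the ratio tends to $1$.

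\emph{Upper bound.} First rotate the phase. Write $a_k(\mu)+i\,b_k(\mu)=\int e^{ik\cdot x}\,m\,\dx = r e^{i\theta}$ with $r^2=q_k(\mu)$. Then
$$
\sqrt{q_k(\mu)}=\int_{\T^d}\cos(k\cdot x-\theta)\,m(x)\,\dx .
$$
The constant $1$ integrates to zero against $\nabla m$. Hence for the direction $e=k/|k|$,
$$
|k|\sqrt{q_k(\mu)}=-\frac{1}{|k|}\int \partial_e\big(\sin(k\cdot x-\theta)\big)\,m\,\dx
=\frac{1}{|k|}\int \sin(k\cdot x-\theta)\,\partial_e m\,\dx .
$$
Write $\partial_e m = (\partial_e m/\sqrt m)\sqrt m$ and apply Cauchy--Schwarz:
$$
|k|^2 q_k(\mu)\le \frac{1}{|k|^2}\Big(\int \sin^2(k\cdot x-\theta)\,m\,\dx\Big)\int\frac{|\partial_e m|^2}{m}\,\dx .
$$
This alone loses a factor, because $|k|^2$ appears in the denominator. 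The better route is to integrate by parts in each coordinate separately rather than only along $e$.

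I would therefore use the pointwise estimate $|\nabla m|^2/m \ge |\nabla m\cdot k|^2/(|k|^2 m)$ and work with the identity
$$
|k|^2\sqrt{q_k}=\int \sin(k\cdot x-\theta)\,(k\cdot\nabla m)\,\dx .
$$
Cauchy--Schwarz then gives
$$
|k|^4 q_k\le \Big(\int\sin^2(k\cdot x-\theta)\,m\Big)\,|k|^2 I(\mu),
\qquad\text{i.e.}\qquad |k|^2 q_k\le S\, I(\mu),
$$
where $S=\int\sin^2(\cdot)\,m$. Next use $\sin^2=1-\cos^2$ and Jensen: $\int\cos^2(\cdot)\,m\ge(\int\cos(\cdot)\,m)^2=q_k$, so $S\le 1-q_k$. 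This yields $|k|^2 q_k\le (1-q_k)\,I$. That is still not the factor $\tfrac12$.

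The main obstacle is exactly this missing factor of $2$. Since the constant $\tfrac12$ is sharp (near uniform, where $S\approx\tfrac12$), I expect the right tool to be a sharper functional inequality. The idea is to use the $\cos^2$ mass as well, via $2\sin^2=1-\cos(2(k\cdot x-\theta))$, so that
$$
S=\tfrac12-\tfrac12\,\mathrm{Re}\!\int e^{2i(k\cdot x-\theta)}\,m\,\dx .
$$
The plan is to combine Cauchy--Schwarz on both real and imaginary parts, testing $\nabla m/\sqrt m$ against $\sqrt m\,\nabla\!\big(\lambda \sin(k\cdot x-\theta)\big)$ and optimizing over $\lambda$ jointly with the $2k$-mode term. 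An alternative is to reduce to one dimension by averaging over the hyperplanes orthogonal to $k$. After a rotation, the resulting one-dimensional density $g$ on a circle has Fisher information at most $I(\mu)/|k|^2$, again by Cauchy--Schwarz on fibres. The problem then becomes the one-dimensional inequality $2|\hat g(1)|^2\le I(g)$, with $\hat g(1)=\int e^{ix}g\,\dx$. I would prove this via $g=h^2$, $I(g)=4\int|h'|^2$, and Wirtinger/Parseval for $h$: the first Fourier mode of $h^2$ is controlled by $2\,\mathrm{Re}\,\hat h(0)\overline{\hat h(1)}$ plus products of higher modes, and is bounded using $\sum n^2|\hat h(n)|^2$. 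This Parseval bookkeeping is where I expect the real work.

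\emph{Sharpness.} Take $m_\eps=(2\pi)^{-d}\big(1+\eps\cos(k\cdot x)\big)$. Then $q_k=\eps^2/4$, and
$$
I(m_\eps)=(2\pi)^{-d}\int\frac{\eps^2|k|^2\sin^2(k\cdot x)}{1+\eps\cos(k\cdot x)}\,\dx=\frac{\eps^2|k|^2}{2}\,(1+O(\eps)),
$$
up to normalization conventions for $q_k$. The ratio $2|k|^2q_k/I$ therefore tends to $1$ as $\eps\to0$; I would check that the normalization constants match exactly.
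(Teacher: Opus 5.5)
\textbf{There is a genuine gap.} The upper bound $2|k|^2q_k(\mu)\le I(\mu)$ is the substance of the lemma, and you never prove it.

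Your Cauchy--Schwarz argument yields only $|k|^2q_k\le S\,I(\mu)$ with $S=\int\sin^2(k\cdot x-\theta)\,m\,\dx\le 1-q_k$. (Your first display has a stray factor $|k|$ and a sign error: correctly, $|k|\sqrt{q_k}=-\int\sin(k\cdot x-\theta)\,\partial_e m\,\dx$. So the directional route and the $k\cdot\nabla m$ route give the same estimate.) The resulting bound $|k|^2q_k\le(1-q_k)I(\mu)$ implies the claim only when $q_k\ge\frac12$. For $q_k<\frac12$ you would need $S\le\frac12$, which fails in general, so this test field cannot produce the constant. The factor $2$ is lost exactly in the near-uniform regime where the constant is attained. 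Idea (a) is too unspecific to assess, and idea (b) stops at the very inequality that must be shown.

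The reduction in (b) is the right one and is the paper's Step 1, provided you implement it as the pushforward under $y=k\cdot x \bmod 2\pi$ rather than a rotation, since rotations do not descend to $\T^d$. Then $\hat g(1)=\int e^{ik\cdot x}m\,\dx$, so $|\hat g(1)|^2=q_k(\mu)$. The bound $I(g)\le I(\mu)/|k|^2$ follows from $k\cdot\nabla_x[\varphi(k\cdot x)]=|k|^2\varphi'(k\cdot x)$, integration by parts, and conditional Jensen.

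What is missing is the one-dimensional inequality $I(g)\ge 2|\hat g(1)|^2$. The paper proves it by an entropy sandwich. Let $re^{i\phi}=\hat g(1)$ and let $\zeta\propto e^{2r\cos(y-\phi)}$ be the von Mises density. Nonnegativity of the relative entropy of $g$ with respect to $\zeta$, together with $\int_\T e^{2r\cos y}\,\dy\le 2\pi e^{r^2}$, gives $\int_\T g\log(2\pi g)\,\dy\ge r^2$. The logarithmic Sobolev inequality on $\T$ gives $\int_\T g\log(2\pi g)\,\dy\le\frac12 I(g)$.

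Your Parseval plan does close, and cheaply, giving an elementary alternative to the log-Sobolev step. Write $\sqrt g=(2\pi)^{-1/2}\sum_{n\in\Z}a_ne^{iny}$. Then $\sum_n|a_n|^2=1$, $\hat g(1)=\sum_n a_n\bar a_{n+1}$, and $\frac14 I(g)=\int_\T|(\sqrt g)'|^2\,\dy=\sum_n n^2|a_n|^2$. Since $\sqrt g$ is real, $b_n:=|a_n|=|a_{-n}|$. Hence $|\hat g(1)|\le 2\sum_{n\ge0}b_nb_{n+1}$ and $\frac14 I(g)=2\sum_{n\ge1}n^2b_n^2$. Weighted Cauchy--Schwarz gives
\[
\sum_{n\ge0}b_nb_{n+1}\le\Big(\sum_{n\ge0}\frac{b_n^2}{(n+1)^2}\Big)^{1/2}\Big(\sum_{n\ge1}n^2b_n^2\Big)^{1/2}\le\Big(\sum_{n\ge1}n^2b_n^2\Big)^{1/2},
\]
because $\sum_{n\ge0}b_n^2/(n+1)^2\le b_0^2+2\sum_{n\ge1}b_n^2=1$. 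Therefore $|\hat g(1)|^2\le 4\sum_{n\ge1}n^2b_n^2=\frac12 I(g)$, as needed. Combined with the projection step, this gives $I(\mu)\ge|k|^2I(g)\ge 2|k|^2q_k(\mu)$, and your proof would then be complete and more elementary than the paper's.

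Your sharpness computation is correct as it stands: $q_k=\eps^2/4$ and $I=\frac{\eps^2|k|^2}{2}(1+o(1))$. It is the paper's Step 4 with $\eps$ replaced by $2\eps$.
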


\begin{Cor}\label{cor:uniqueness-single-cos}
If $\psi(x)=\gamma \cos(k\cdot x)$ for some $k\in\Z^d\setminus\{0\}$ and  $\gamma\in\R$, then  $\Lambda(\psi)= \gamma_-$. Consequently, if $\gamma >-\frac{\kappa_c}{2}$, then $\U$ is the unique stationary equilibrium.
\end{Cor}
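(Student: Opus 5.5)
For $\psi(x)=\gamma\cos(k\cdot x)$ with $k\neq0$, I first read off the Fourier coefficients in the convention $\psi(x)=c_0+\frac12\sum_{\ell\neq0}c_\ell\cos(\ell\cdot x)$. Since $\cos(k\cdot x)=\cos(-k\cdot x)$, we can write $\gamma\cos(k\cdot x)=\frac12(\gamma\cos(k\cdot x)+\gamma\cos(-k\cdot x))$. Hence $c_k=c_{-k}=\gamma$ and all other $c_\ell$ vanish. Moreover $q_{-k}(\mu)=a_k(\mu)^2+b_k(\mu)^2=q_k(\mu)$, because $a_{-k}=a_k$ and $b_{-k}=-b_k$. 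The sum appearing in \eqref{eq:Lambda} therefore collapses to
\[
\sum_{\ell\neq0}c_\ell|\ell|^2q_\ell(\mu)=2\gamma|k|^2q_k(\mu),
\qquad\text{so}\qquad
\Lambda(\psi)=\sup_{\mu\in\cP^2_+(\T^d)\setminus\{\U\}}\Big(-\gamma\,\frac{2|k|^2q_k(\mu)}{I(\mu)}\Big).
\]
Here $I(\mu)>0$ for $\mu\neq\U$, since $I(\mu)=0$ forces $\nabla m\equiv0$.

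I then split into two cases according to the sign of $\gamma$.

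\emph{Case $\gamma<0$.} Then $-\gamma=|\gamma|>0$, and the supremum equals $|\gamma|\cdot\sup_\mu 2|k|^2q_k(\mu)/I(\mu)=|\gamma|=\gamma_-$ by Lemma~\ref{lem:fisher-info-lower-bound}.

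\emph{Case $\gamma\ge0$.} Every quantity inside the supremum is $\le0$, so $\Lambda(\psi)\le0$. For the matching lower bound I need a family of $\mu\neq\U$ along which $q_k(\mu)/I(\mu)\to0$. One choice is to take $\mu$ with density proportional to $1+\eps\cos(j\cdot x)$ for some $j\notin\{k,-k\}$. Then $q_k(\mu)=0$ exactly, since $a_k(\mu)=b_k(\mu)=0$ by orthogonality, while $I(\mu)>0$. This shows the supremum is exactly $0=\gamma_-$.

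The consequence then follows directly. If $\gamma>-\kappa_c/2$, then $\gamma_-<\kappa_c/2$: when $\gamma\ge0$ we have $\gamma_-=0<\kappa_c/2$, and when $\gamma<0$ we have $\gamma_-=-\gamma<\kappa_c/2$. Hence condition \eqref{eq:Lambda} holds, and the preceding proposition gives uniqueness of $\U$.

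The only step needing care is the $\gamma\ge0$ case. There I must verify that a suitable $j\in\Z^d\setminus\{0,\pm k\}$ exists, for instance $j=2k$, and that the resulting density lies in $\cP^2_+(\T^d)$ for small $\eps>0$. Both are immediate. Everything else is bookkeeping with Lemma~\ref{lem:fisher-info-lower-bound}.
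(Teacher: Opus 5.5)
Your proof is correct and follows the paper's argument essentially step for step. You reduce $\Lambda(\psi)$ to $\sup_\mu\big(-2\gamma|k|^2q_k(\mu)/I(\mu)\big)$, invoke Lemma~\ref{lem:fisher-info-lower-bound} when $\gamma<0$, and exhibit a non-uniform $\mu$ with $q_k(\mu)=0$ when $\gamma\ge0$; your explicit choice of density $\propto 1+\eps\cos(2k\cdot x)$ is a cleaner instance of the construction the paper only points to.
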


\begin{proof}
In this case $c_j=0$ for all $j\notin\{0,-k,k\}$ and $c_k=c_{-k}=\gamma $.  By definition of $\Lambda$ in \eqref{eq:Lambda},
$$
\Lambda(\psi)= \sup\Big\{\frac{-2\gamma |k|^2q_k(\mu)}{I(\mu)} \, \Big| \,\ \mu\in \cP^2_+(\T^d)\setminus\{\U\} \Big\}.
$$
If $\gamma \geq0$, then choose any $\mu\neq \U$ with $q_k(\mu)=0$ to see that $\Lambda(\psi)=0$ (see the proof of Proposition \ref{prop:LL} for a construction). For $\gamma <0$, by Lemma \ref{lem:fisher-info-lower-bound}, $\Lambda(\psi)=-\gamma=\gamma_-$.
\end{proof}

\section{Application to the Kuramoto MFG}\label{sec:Kuramoto}

The classical Kuramoto MFG of Carmona, Cormier, and Soner \cite{carmona_synchronization_2023}, first proposed by Yin, Mehta, Meyn, and Shanbhag in \cite{yin2011synchronization, yin2011bifurcation} and further studied in \cite{hofer2025synchronization, carmona2025kuramoto, cesaroni2024stationary}, provides a fascinating model to explain the phenomenon of spontaneous synchronization  as an outcome of a non-cooperative game between a continuum of agents. Here, the state space is the one-dimensional torus representing the phase of the typical \emph{oscillator} (or player) that minimizes \eqref{eq:typical-player-problem} with running cost given by
\begin{equation}\label{eq:Kuramoto-running-cost}
f_\kappa(x,\mu)  = 2\kappa \, \int_{\T} \sin^2\Big(\frac{x-y}{2}\Big)\,\mu(\dy),\qquad (x,\mu)\in\T\times\cP(\T).   
\end{equation}
Here, $\kappa>0$ is the \emph{coupling constant}. Higher values mean a higher cost of being misaligned. Clearly, $f_\kappa$ is the linear derivative of the potential
$$
\cF_\kappa(\mu) = \kappa\, \iint_{\T\times\T} \sin^2\Big(\frac{x-y}{2}\Big)\,\mu(\dx)\mu(\dy),\qquad \mu\in\cP(\T),
$$
see also H\"{o}fer and Soner \cite[Section 6.3]{hofer2024optimal}. 

Carmona, Cormier, and Soner establish a phase transition in the Kuramoto MFG. More precisely, they prove that in the \emph{supercritical regime} $\kappa>\kappa_c$, there exist non-uniform stationary equilibria, while the uniform distribution is a locally stable equilibrium for $\kappa<\kappa_c$, see \cite[Theorem 4.4]{carmona_synchronization_2023}. Here,  $\kappa_c$ is defined as in \eqref{eq:critical-kappa}. Using the results of our present paper, we are able to strengthen the analysis in the subcritical regime $\kappa<\kappa_c$. We also remark that in this case,
$$
\kappa_c=\sigma^2\Big(\rho+\frac{\sigma^2}{2}\Big),
$$
in accordance with the critical coupling strength in \cite{carmona_synchronization_2023} upon setting $\sigma:=\sqrt{2\nu}$. 

\begin{Cor}
Consider the Kuramoto MFG with running cost \eqref{eq:Kuramoto-running-cost} for $d=1$. Then, for all $\kappa<\kappa_c$, the uniform distribution $\U$ on $\T$ is the unique stationary Nash equilibrium. In addition, if $(\mu_t)_{t\ge0}$ is any time-dependent equilibrium, then $\mu_t$ converges to $\U$ weakly as $t\uparrow\infty$.
\end{Cor}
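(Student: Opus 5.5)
We reduce the Kuramoto potential to the single-cosine setting of Corollary \ref{cor:uniqueness-single-cos} and then invoke Theorem \ref{thm:main}.

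\emph{Rewriting the kernel.} Since $\sin^2(z/2)=\tfrac12(1-\cos z)$, we have
$$
\cF_\kappa(\mu)=\frac{\kappa}{2}-\frac{\kappa}{2}\iint_{\T\times\T}\cos(x-y)\,\mu(\dx)\mu(\dy).
$$
Thus $\cF_\kappa$ is of the form \eqref{eq:F-kernel} with $d=1$ and $\psi(x)=\tfrac{\kappa}{2}-\tfrac{\kappa}{2}\cos x$. The additive constant $\tfrac{\kappa}{2}$ changes $\cF$ only by a constant and $f$ only by a $\mu$-independent constant. It therefore affects neither the set of equilibria nor the derivative $\frac{\d}{\dt}\cF(\eta_t)$, so $\Lambda(\psi)$ is unchanged and we may drop it. The relevant kernel is then $\gamma\cos(k x)$ with $k=1$ and $\gamma=-\kappa/2$. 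Note also that $f_\kappa=2\int\psi(x-y)\mu(\dy)$ exactly matches \eqref{eq:f-kernel}.

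\emph{Uniqueness.} By Corollary \ref{cor:uniqueness-single-cos}, $\Lambda(\psi)=\gamma_-=\kappa/2$, and uniqueness of $\U$ holds provided $\gamma>-\kappa_c/2$, i.e.\ $\kappa<\kappa_c$. One subtlety is that Corollary \ref{cor:uniqueness-single-cos} (via Corollary \ref{cor:heat-flow}) only rules out stationary equilibria in $\cP^2_+(\T)$. This is not a restriction, because every stationary equilibrium has a classical density of the form \eqref{eq:stationary-density}. That density is strictly positive and $C^2$, since $u$ is a classical solution of \eqref{eq:stationary-MFG-system} with smooth right-hand side. So every stationary equilibrium lies in $\cP^2_+(\T)$, and $\U$ is the unique one. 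Uniqueness is of course also immediate from the strict inequality: a non-uniform equilibrium $\mu$ would give $\Lambda(\psi)\ge \kappa_c/2$ via Lemma \ref{lem:heat-flow-identities}, contradicting $\Lambda(\psi)=\kappa/2<\kappa_c/2$.

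\emph{Convergence.} We verify Assumption \ref{asm:coupling} for $f_\kappa$.
\begin{enumerate}[(i)]
\item $\cF_\kappa$ is bounded below, indeed $\cF_\kappa\ge0$.
\item $f_\kappa(\cdot,\mu)$ is a convolution of $\mu$ with a smooth function, so $\sup_\mu\|f_\kappa(\cdot,\mu)\|_{C^2}\le 2\kappa\|\sin^2(\cdot/2)\|_{C^2}$.
\item $\delta_\mu f_\kappa(x,\mu,y)=2\kappa\sin^2((x-y)/2)$ is independent of $\mu$ and smooth in $(x,y)$, so all required mixed derivatives exist and are continuous.
\end{enumerate}
Theorem \ref{thm:main}\eqref{eq:main-result-ii} with the unique stationary equilibrium $\U$ then gives $\mu_t\to\U$ weakly.

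\emph{Main obstacle.} None of these steps is hard. The only point requiring care is the bookkeeping of constants: the normalization $c_k$ in the Fourier expansion, the factor $\tfrac12$ there, and the additive constant. These must be tracked so that the threshold comes out exactly as $\kappa<\kappa_c$ and is not off by a factor of $2$.
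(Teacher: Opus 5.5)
Your proposal is correct and follows the paper's own route. Like the paper, you write $\psi_\kappa=\tfrac{\kappa}{2}-\tfrac{\kappa}{2}\cos z$, apply Corollary \ref{cor:uniqueness-single-cos} with $\gamma=-\kappa/2$ to get $\Lambda(\psi_\kappa)=\kappa/2<\kappa_c/2$, and then invoke Theorem \ref{thm:main}. Your extra checks fill in details the paper leaves implicit: dropping the additive constant, showing that every stationary equilibrium lies in $\cP^2_+(\T)$ so uniqueness holds in all of $\cP(\T)$, and verifying Assumption \ref{asm:coupling} for $f_\kappa$.
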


\begin{proof}
Observe that $\cF_\kappa$ is of the form \eqref{eq:F-kernel} with $\psi=\psi_\kappa$ where
$$
\psi_\kappa(z) = \kappa \sin^2\left(\frac{z}{2}\right)=\frac{\kappa}{2}-\frac{\kappa}{2}\cos(z),\qquad c_0=\frac{\kappa}{2},\qquad c_{-1}=c_1=-\frac{\kappa}{2}.
$$
By Corollary \ref{cor:uniqueness-single-cos}, $\Lambda(\psi_\kappa)=\frac{\kappa}{2}$, so that the claims follow from Corollary \ref{cor:uniqueness-single-cos} and Theorem \ref{thm:main}.
\end{proof}

\appendix

\section{HJB estimates}\label{app:HJB-estimates}

\begin{Lem}[$L^\infty$-bounds]\label{lem:L-infty-bounds}
Let $(u,m)$ be a classical solution to \eqref{eq:inhomog-MFG-system}. Then, there exists a finite constant $c_*>0$ such that
$$
\| \nabla u\|_{L^\infty([0,\infty)\times\T^d)}+\| \Delta u\|_{L^\infty([0,\infty)\times\T^d)}\leq c_*.
$$
\end{Lem}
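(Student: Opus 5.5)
We obtain the bounds from the stochastic control representation of $u$ and the uniform $C^2$ bound on $f(\cdot,\mu)$ in Assumption \ref{asm:coupling}. The bounds concern $t>0$, or $t\ge t_0$ after a short initial layer where $\mu_0$ may be singular; the constant $c_*$ may depend on $\rho,\nu,d$ and $C_f:=\sup_\mu\|f(\cdot,\mu)\|_{C^2}$, but on nothing else.

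\textbf{Step 1: bound on $u$.} Set $\psi(t,x):=f(x,\mu_t)$, so that $u$ solves a linear-in-$\psi$, convex-Hamiltonian HJB equation with a source satisfying $\|\psi(t,\cdot)\|_{C^2}\le C_f$ uniformly in $t$. By the verification argument, $u(t,x)$ is the value of the discounted control problem started at $(t,x)$. Comparing with $\alpha\equiv0$ gives $u\le C_f/\rho$, and nonnegativity of the kinetic cost gives $u\ge -C_f/\rho$. Hence $\|u\|_\infty\le C_f/\rho$.

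\textbf{Step 2: Lipschitz bound.} Use synchronous coupling: for $x,y\in\T^d$, apply the control that is optimal from $x$ to the state started at $y$, driven by the same Brownian motion. The two trajectories then differ by the constant $y-x$ for all times. It follows that
\[
u(t,y)-u(t,x)\le \int_0^\infty e^{-\rho s}\|\nabla\psi\|_\infty|x-y|\,\ds\le C_f|x-y|/\rho,
\]
so $\|\nabla u\|_\infty\le C_f/\rho$.

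\textbf{Step 3: semiconcavity.} Apply the same coupling with the states $x+h$ and $x-h$, both using the control optimal from $x$. Together with $\|D^2\psi\|_\infty\le C_f$, this yields
\[
u(t,x+h)+u(t,x-h)-2u(t,x)\le C_f|h|^2/\rho,
\]
hence $D^2u\le (C_f/\rho)\,\mathrm{Id}$ in the viscosity, and therefore classical, sense. This gives the upper bound $\Delta u\le dC_f/\rho$.

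\textbf{Step 4: lower bound on $\Delta u$.} This is the main obstacle: semiconcavity gives only a one-sided Hessian bound, and we need two-sided control of $\Delta u$. The plan is to rearrange the HJB equation as
\[
\nu\Delta u=\rho u+\tfrac12|\nabla u|^2-\psi-\partial_t u.
\]
Steps 1 and 2 bound every term on the right except $\partial_t u$, so the task reduces to a uniform bound on $\partial_t u$. For this, differentiate the HJB equation in $t$; the justification that mixed derivatives commute was noted earlier. The function $v:=\partial_t u$ then solves the linear equation
\[
-\partial_t v+\rho v-\nu\Delta v+\nabla u\cdot\nabla v=\partial_t\psi .
\]
Moreover,
\[
\partial_t\psi=\int\big(\nu\Delta_y\delta_\mu f-\nabla_y\delta_\mu f\cdot\nabla u\big)\,m\,\dy,
\]
exactly as computed in the proof of Proposition \ref{prop:limit-points-are-solutions}. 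This expression is bounded by Assumption \ref{asm:coupling} together with Step 2. Applying the maximum principle, equivalently the Feynman--Kac representation with discount $\rho$, gives $\|v\|_\infty\le\|\partial_t\psi\|_\infty/\rho$.

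One gap remains: the bound on $v$ requires a priori that $v$ is bounded, to rule out growth of $v$ at infinity. I would supply this by working with the discounted value function, which is bounded, or alternatively by a time-shift coupling argument. Combining Steps 3 and 4 then gives the two-sided bound on $\Delta u$ and completes the plan.
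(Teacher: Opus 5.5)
Your plan follows the paper's proof. You bound $u$ and $\nabla u$ from the control representation, bound $\partial_t u$ through a linear equation whose source $\partial_t\psi$ is controlled via the Fokker--Planck equation, and read off $\Delta u$ from the HJB equation. Two small remarks. First, your Step 3 is superfluous: once $\partial_t u$ is bounded, the identity $\nu\Delta u=\rho u+\frac12|\nabla u|^2-\psi-\partial_t u$ bounds $\Delta u$ from both sides, and this is all the paper uses. Second, $c_*$ also depends on $\|\Delta_y\delta_\mu f\|_\infty$ and $\|\nabla_y\delta_\mu f\|_\infty$ through the bound on $\partial_t\psi$, not only on $C_f$. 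Also, no initial layer is needed, since none of the bounds uses regularity of $\mu_0$.

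The gap you flag in Step 4 is genuine. On $[0,\infty)$ the homogeneous equation is solved by $e^{\rho t}$, so the maximum principle needs a priori control of $v$ at infinity. Your first remedy does not close it as stated, because boundedness of $u$ says nothing directly about $\partial_t u$. What it does give is boundedness, for each fixed $h>0$, of the difference quotient $w_h:=(u(\cdot+h,\cdot)-u)/h$, namely $\|w_h\|_\infty\le 2\|u\|_\infty/h$. This is exactly the paper's device. Subtracting the HJB equations at times $t+h$ and $t$ gives
\[
-\partial_t w_h+\rho w_h-\nu\Delta w_h+b_h\cdot\nabla w_h=\frac{\psi(t+h,x)-\psi(t,x)}{h},\qquad b_h:=\tfrac12\big(\nabla u(t+h,x)+\nabla u(t,x)\big),
\]
with $b_h$ bounded by your Step 2. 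Feynman--Kac therefore applies in the class of bounded solutions and yields $\|w_h\|_\infty\le\|\partial_t\psi\|_\infty/\rho$ uniformly in $h$. Letting $h\downarrow0$ bounds $\partial_t u$, and you never have to differentiate the HJB equation in time.

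Your second remedy also works and is even shorter. By stationarity of Brownian increments, $u(t+h,x)$ is the value from $(t,x)$ of the control problem with cost $\psi(\cdot+h,\cdot)$. The same comparison of infima as in your Step 2 then gives $|u(t+h,x)-u(t,x)|\le h\|\partial_t\psi\|_\infty/\rho$ directly. One of these two arguments has to be written out for the proof to be complete.
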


\begin{proof}
Set $\psi(t,x):=f(x,\mu_t)$ for $(t,x)\in[0,\infty)\times\T^d$.
\vspace{1em}

\emph{Step 1. Gradient estimate.} Let $(t,x)\in[0,\infty)\times\T^d$ and $\xi\in\R^d$ be a direction (with $|\xi|=1$). For a control $\alpha\in\sA$, let 
$$
X^{t,x,\alpha}_s=x + \int_t^s \alpha_r\,\dr + \sqrt{2\nu}\, (W_s-W_t),\qquad s\ge t.
$$
Then,
$$
\frac{1}{|h|} |u(t,x+h\xi)-u(t,x)|\leq \sup_{\alpha\in\sA}\E\Big[\int_t^\infty e^{-\rho (s-t)}\frac{1}{|h|} |\psi(s,X^{t,x,\alpha}_s+h\xi)-\psi(s,X^{t,x,\alpha}_s)|\,\ds\Big].
$$
Since $\|\nabla \psi\|_\infty\leq \sup_{\mu\in\cP(\T^d)}\|\nabla f(\mu,\cdot)\|_\infty<\infty$, the claim follows from taking $h\downarrow0$ in
$$
\frac{1}{|h|} |u(t,x+h\xi)-u(t,x)| \leq \frac{1}{\rho} \|\nabla \psi\|_\infty.
$$

\emph{Step 2. Time derivative.} Since $m$ solves $\partial_t m=\nu\Delta m + \nabla\cdot(m\nabla u)$, we can differentiate
\begin{align*}
\partial_t\psi(t,x) &= \int_{\T^d} \delta_\mu f(x,\mu_t,y)\, \partial_tm(t,y)\,\dy\\
&= \int_{\T^d} \delta_\mu f(x,\mu_t,y)\, (\nu \Delta_y m+\nabla_y\cdot(m\nabla_y u))\,\dy\\
&= \int_{\T^d} (\nu \Delta_y \delta_\mu f(x,\mu_t,y) - \nabla_y \delta_\mu f(x,\mu_t,y) \cdot \nabla u(t,y))m\,\dy,
\end{align*}
so that 
$$
\sup_{(t,x)\in(0,\infty)\times \T^d}|\partial_t \psi(t,x)|\leq \nu\|\Delta_y \delta_\mu f\|_\infty + \rho^{-1}\|\nabla_y \delta_\mu f\|_\infty \|\nabla f\|_\infty =:c_0<\infty.
$$
For $(t,x)\in(0,\infty)\times\T^d$ and $h>0$ set
$$
w_h(t,x) := \frac{1}{h} (u(t+h,x)-u(t,x)).
$$
Then,
\begin{align*}
\frac1h (\psi(t+h,x)-\psi(t,x))&=
-\partial_t w_h + \rho w_h - \nu\Delta w_h +\frac{1}{2h}(|\nabla u(t+h,x)|^2-|\nabla u(t,x)|^2)\\
&= -\partial_t w_h + \rho w_h - \nu\Delta w_h + b_h(t,x)\cdot \nabla w_h(t,x),
\end{align*}
with $b_h(t,x):=(\nabla u(t+h,x)+\nabla u(t,x))/2$. By Step 1, $b_h$ is uniformly bounded. By Feynman-Kac,
$$
w_h(t,x)=\E\Big[\int_t^\infty e^{-\rho(s-t)} \frac{\psi(s+h,X^{t,x}_s)-\psi(s,X^{t,x}_s)}{h}\,\ds\Big],
$$
where $X^{t,x}_s=x-\int_t^s b_h(r,X^{t,x}_r)\,\dr +\sqrt{2\nu}\,(W_s-W_t)$.  Hence, by letting $h\downarrow0$,
$$
\|\partial_t u\|_{L^\infty([0,\infty)\times\T^d)}\leq \frac{1}{\rho} c_0.
$$

\emph{Step 3. Laplacian.} The estimate for the Laplacian follows from the HJB equation, together with the estimate $\|u\|_\infty\leq \|f\|_\infty/\rho$ from the control characterization. Indeed, the HJB equation yields
$$
\|\Delta u\|_{L^\infty([0,\infty)\times\T^d)} \leq \nu^{-1} \big(\|\partial_t u\|_\infty  + \rho \|u\|_\infty + \frac12 \|\nabla u\|_\infty^2 + \|f\|_\infty\big),
$$
which is uniformly bounded by the previous steps and Assumption \ref{asm:coupling}.
\end{proof}

\begin{Lem}[Stability]\label{lem:stability}
Let $\psi_n:[0,\infty)\times\T^d\mapsto\R$, $n\ge1$, and $\bar\psi:\T^d\mapsto\R$ be bounded continuously differentiable functions. Let $u_n:[0,\infty)\times\T^d\mapsto\R$ be the classical bounded solution to the discounted HJB
$$
-\partial_t u_n+\rho u_n - \nu\Delta u_n+\frac12|\nabla u_n|^2=\psi_n(t,x),\qquad (t,x)\in(0,\infty)\times\T^d,
$$
and let $\bar u:\T^d\mapsto\R$ be the unique classical solution to the stationary HJB
$$
\rho \bar u - \nu\Delta \bar u +\frac12|\nabla \bar u|^2=\bar \psi(x),\qquad x\in\T^d.
$$
If
\begin{itemize}
\item $\sup_{n\ge1} \big(\|\psi_n\|_{L^\infty([0,\infty)\times\T^d)} +  \|\partial_t \psi_n\|_{L^\infty([0,\infty)\times\T^d)} + \|\nabla \psi_n\|_{L^\infty([0,\infty)\times\T^d)} \big)<\infty$,
\item  $\psi_n\to\bar\psi$ locally uniformly as $n\uparrow\infty$,
\end{itemize}
then $\nabla u_n\to\nabla \bar u$ locally uniformly as $n\uparrow\infty$.
\end{Lem}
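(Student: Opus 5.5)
The plan is to obtain uniform-in-$n$ regularity for $u_n$ and then identify every subsequential limit. First, I will collect uniform bounds by repeating the arguments of Lemma \ref{lem:L-infty-bounds}, which only use bounds on $\psi_n$, $\nabla\psi_n$ and $\partial_t\psi_n$.
\begin{itemize}
\item The control representation gives $\|u_n\|_\infty\le \|\psi_n\|_\infty/\rho$.
\item Step 1 of Lemma \ref{lem:L-infty-bounds} gives $\|\nabla u_n\|_\infty\le\|\nabla\psi_n\|_\infty/\rho$.
\item The difference-quotient and Feynman--Kac argument of Step 2 gives $\|\partial_t u_n\|_\infty\le \|\partial_t\psi_n\|_\infty/\rho$.
\item The HJB equation then gives a uniform bound on $\|\Delta u_n\|_\infty$.
\end{itemize}
All these constants are independent of $n$ by the first hypothesis.

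Next I need compactness of $\nabla u_n$. For each fixed $t$, the equation reads $\nu\Delta u_n(t,\cdot)=g_n(t,\cdot)$ with $g_n:=-\partial_tu_n+\rho u_n+\frac12|\nabla u_n|^2-\psi_n$ uniformly bounded. Elliptic $W^{2,p}$ estimates on $\T^d$ for large $p$, together with Sobolev embedding, give a uniform $C^{1,\beta}$ bound in $x$ for $u_n(t,\cdot)$. For time regularity of $\nabla u_n$, I will interpolate between the Lipschitz-in-time bound for $u_n$ (from $\partial_tu_n$ bounded) and the uniform $C^{1,\beta}$ bound in $x$. The standard interpolation $\|\nabla v\|_\infty\lesssim \|v\|_\infty^{\beta/(1+\beta)}[\nabla v]_{\beta}^{1/(1+\beta)}$, applied to $v=u_n(t,\cdot)-u_n(s,\cdot)$, gives a Hölder modulus in $t$ for $\nabla u_n$. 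Alternatively, one could use parabolic $W^{2,1}_p$ estimates on cylinders $[a,a+1]\times\T^d$.

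With these bounds, Arzelà--Ascoli on each $[0,T]\times\T^d$ and a diagonal argument show that $u_n\to v$ and $\nabla u_n\to\nabla v$ locally uniformly along a subsequence. By stability of viscosity solutions, and since $\psi_n\to\bar\psi$ locally uniformly, $v$ is a bounded viscosity solution of $-\partial_tv+\rho v-\nu\Delta v+\frac12|\nabla v|^2=\bar\psi$ on $(0,\infty)\times\T^d$. It remains to show $v=\bar u$, which I expect to be the main obstacle, since no terminal condition is available.

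To identify $v$, I will use the bounded-solution comparison principle for the discounted equation. Set $z:=v-\bar u$. Subtracting the two equations and linearizing the Hamiltonian gives $-\partial_tz+\rho z-\nu\Delta z+b\cdot\nabla z=0$ with bounded drift $b=\frac12(\nabla v+\nabla\bar u)$. Feynman--Kac on $[t,T]$ then yields $|z(t,x)|\le e^{-\rho(T-t)}\|z\|_\infty$, and letting $T\to\infty$ gives $z\equiv0$. To justify Feynman--Kac I need $v$ regular enough; local $W^{2,1}_p$ regularity, which holds because $\nabla v$ is bounded and the equation is then linear with bounded coefficients, suffices for Itô--Krylov. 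Equivalently, one can apply the maximum principle to $z\pm\epsilon e^{\rho' t}$ style barriers. Since every subsequence has a further subsequence along which $\nabla u_n\to\nabla\bar u$ locally uniformly, the whole sequence converges.
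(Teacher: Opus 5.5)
Your argument is correct, but it takes a compactness--uniqueness route, whereas the paper's proof is direct and quantitative. The paper never passes to subsequences. In Step~1 it compares the control representations of $u_n$ and $\bar u$ to get $\|u_n-\bar u\|_{L^\infty(Q_t)}\le \rho^{-1}\|\psi_n-\bar\psi\|_{L^\infty(Q_{t+T})}+2C\rho^{-1}e^{-\rho T}$, with $Q_r=[0,r]\times\T^d$. So the limit is identified as $\bar u$ immediately, and the absence of a terminal condition, which you single out as the main obstacle, never becomes an issue. It then upgrades to gradients in four moves: it writes the linear equation for $w_n=u_n-\bar u$ with drift $b_n=\frac12(\nabla u_n+\nabla\bar u)$; it multiplies by a time cut-off to create a zero terminal condition; it represents the gradient of the time-reversed function through the periodic heat kernel, using $\|\nabla K(t,\cdot)\|_{L^1}\lesssim t^{-1/2}$; and it closes the resulting singular Volterra inequality by one iteration and Gr\"onwall. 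This gives the explicit bound $\|\nabla u_n-\nabla\bar u\|_{L^\infty(Q_T)}\le C_T'(\|g_n\|_{L^\infty(Q_{T+1})}+\rho\|v_n\|_{L^\infty(Q_{T+1})})$. Beyond Step~1 it uses only the uniform gradient bound $M$; the hypothesis on $\partial_t\psi_n$ is not used at all. Your route needs that hypothesis, to bound $\partial_t u_n$ and hence $\Delta u_n$. It also relies on Calder\'on--Zygmund estimates, Morrey's embedding, Arzel\`a--Ascoli, viscosity stability, and an It\^o--Krylov or maximum-principle uniqueness argument, and it yields only qualitative convergence. In your identification step it is simpler to note that $v$ inherits $\partial_t v,\Delta v\in L^\infty$ from the uniform bounds and satisfies the limit equation a.e., rather than arguing through viscosity theory. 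Finally, your own interpolation inequality combined with the paper's Step~1 already gives a much shorter proof. Apply it to $u_n(t,\cdot)-\bar u$, whose gradient has $\beta$-H\"older seminorm bounded uniformly in $n$ and $t$. This yields $\sup_{t\le T}\|\nabla u_n(t,\cdot)-\nabla\bar u\|_\infty\lesssim\sup_{t\le T}\|u_n(t,\cdot)-\bar u\|_\infty^{\beta/(1+\beta)}\to0$, with no time regularity, compactness, or identification step needed.
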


\begin{proof}
Set
$$
C:=\max\Big\{\sup_{n\geq1}\|\psi_n\|_{L^\infty([0,\infty)\times\T^d)},\ \|\bar\psi\|_{L^\infty(\T^d)}\Big\}.
$$
For $r\ge0$, we let $Q_r:=[0,r]\times \T^d$.
\vspace{1em}

\emph{Step 1. Convergence of $u_n$.} Using the control characterization of $u_n$ and $\bar u$, for any $T>0$ and $(t,x)\in[0,\infty)\times\T^d$,
\begin{align*}
|u_n(t,x)-\bar u(x)| &\leq \sup_\alpha \E\int_t^\infty e^{-\rho (s-t)} |\psi_n(s,X^{\alpha}_s)-\bar \psi(X^\alpha_s)|\,\d s \\
&\leq \sup_{s\in[t,t+T]}\, \|\psi_n(s,\cdot)-\bar \psi\|_\infty \int_t^{t+T} e^{-\rho(u-t)}\,\d u  + \int_{t+T}^\infty e^{-\rho(s-t)}2C\,\d s\\
&\leq \frac1\rho \sup_{s\in[t,t+T]}\, \|\psi_n(s,\cdot)-\bar \psi\|_\infty + \frac{2C}{\rho} e^{-\rho T}
\end{align*}
This implies, for any $t\ge0$,
$$
\|u_n-\bar u\|_{L^\infty(Q_{t})} \leq \frac{1}{\rho}\|\psi_n-\bar\psi\|_{L^\infty(Q_{t+T})}+\frac{2C}{\rho} e^{-\rho T}.
$$
Sending first $n\uparrow\infty$ and then $T\uparrow \infty$ shows that $u_n\to\bar u$ locally uniformly.
\vspace{1em}

\emph{Step 2.} Set 
$$
w_n := u_n-\bar u,\qquad f_n:=\psi_n-\bar\psi,\qquad b_n:=\frac12 (\nabla u_n+\nabla \bar u).
$$
Using the dynamic programming equations satisfied by $u_n$ and $\bar u$, 
$$
-\partial_t w_n + \rho w_n - \nu \Delta w_n + b_n\cdot\nabla w_n=f_n,\qquad \text{on}\ \ (0,\infty)\times\T^d.
$$
The same argument as in the proof of Lemma \ref{lem:L-infty-bounds} shows that
$$
M:=\sup_n \|b_n\|_{L^\infty([0,\infty)\times\T^d)}<\infty.
$$
Fix $T>0$, and choose $\eta \in C^\infty_c([0,T+1])$ with $0\le \eta\le 1$ and $\eta\equiv1$ on $[0,T]$. Define 
$$
v_n(t,x):=\eta(t)w_n(t,x),\qquad (t,x)\in Q_{T+1}.
$$
Then $v_n=w_n$ on $Q_T$, $v_n(T+1,\cdot)\equiv0$, and 
$$
-\partial_t v_n + \rho v_n - \nu \Delta v_n + b_n\cdot \nabla v_n = g_n,\qquad \text{on} \ \ Q_{T+1}.
$$
where $g_n:=\eta f_n-\eta ' w_n$. Note that 
$$
\|g_n\|_{L^\infty(Q_{T+1})}\leq \|\psi_n-\bar\psi\|_{L^\infty(Q_{T+1})} + \|\eta'\|_{L^\infty([0,T+1))} \, \|u_n-\bar u\|_{L^\infty(Q_{T+1})},
$$
showing that $\|g_n\|_{L^\infty(Q_{T+1})}\to0$ as $n\uparrow\infty$ by assumption and Step 1.

\vspace{1em}
\emph{Step 3.} Let us reverse time and set, for $(s,x)\in Q_{T+1}$,
$$
(\tilde v_n,\tilde b_n,\tilde g_n)(s,x) := (v_n,b_n,g_n)(T+1-s,x),\qquad h_n:=\tilde g_n-\tilde b_n\cdot\nabla \tilde v_n-\rho \tilde v_n.
$$
Then, $\tilde v_n$ satisfies
$$
\partial_s \tilde v_n-\nu\Delta  \tilde v_n=h_n,\qquad \tilde v_n(0,\cdot)=0.
$$
Hence, by Feynman-Kac, for $(s,x)\in Q_{T+1}$
\begin{align*}
\tilde v_n(s,x) &= \E\left[\int_0^s h_n(r,x+\sqrt{2\nu}W_{s-r})\,\dr\right] \\
&= \int_0^s \int_{\T^d} h_n(r,x+y)K(s-r,y)\,\d y\,\dr\\
&=\int_0^s \int_{\T^d} h_n(r,y)K(s-r,x-y)\,\d y\,\dr.
\end{align*}
Here, $K(\cdot,\cdot)$ is the periodic heat kernel
$$
K(t,y):=\sum_{k\in (2\pi\Z)^d} G(t,y+k),\qquad (t,y)\in(0,\infty)\times\T^d
$$
where
$$
G(t,y) = \frac{1}{(4\pi\nu t)^{d/2}}\ \exp\left(-\frac{|y|^2}{4\nu t}\right),\qquad (t,y)\in(0,\infty)\times\R^d.
$$
Differentiating, 
$$
\|\nabla K(t,\cdot)\|_{L^1(\T^d)} \leq \sum_{k\in (2\pi\Z)^d} \int_{\T^d} |\nabla G(t,y+k)|\,\dy = \int_{\R^d} |\nabla G(t,y)|\,\d y= C_{\nu,d} t^{-1/2}
$$
for some $C_{\nu,d}<\infty$ only depending on $\nu$ and the dimension $d$.  We obtain
$$
\|\nabla \tilde v_n(s,\cdot)\|_{L^\infty(\T^d)} \leq C_{\nu,d}\int_0^s\frac{1}{\sqrt{s-r}} \left(\|\tilde g_n\|_{L^\infty(Q_{T+1})} + M\|\nabla \tilde v_n(r,\cdot)\|_{L^\infty(\T^d)} + \rho \|\tilde v_n\|_{L^\infty(Q_{T+1})}\right)\,\dr.
$$
Letting $x_n(s):=\|\nabla \tilde v_n(s,\cdot)\|_{L^\infty(\T^d)} $,
\begin{equation}\label{eq:volterra}
x_n(s)\leq a_n + b \int_0^s \frac{1}{\sqrt{s-r}} x_n(r)\,\dr ,\qquad s\in[0,T+1],   
\end{equation}
with 
$$
a_n:= 2 C_{\nu,d} \sqrt{T+1} \, (\|\tilde g_n\|_{L^\infty(Q_{T+1})} + \rho \|\tilde v_n\|_{L^\infty(Q_{T+1})}),\qquad b:= C_{\nu,d}M.
$$
\emph{Step 4.} We iterate the estimate \eqref{eq:volterra}, for $s\in[0,T+1]$,
\begin{align*}
x_n(s) &\leq a_n+ b\int_0^s \frac{1}{\sqrt{s-r}}\left(a_n + b \int_0^r \frac{1}{\sqrt{r-\tau}} x_n(\tau)\,\d\tau\right)\,\d r\\
&= a_n + 2a_n b \sqrt{s} + b^2 \int_0^s \underbrace{\left( \int_\tau^ s \frac{1}{\sqrt{s-r}} \frac{1}{\sqrt{r-\tau}}\,\dr\right)}_{=\pi} x_n(\tau)\,\d \tau\\
&\leq  a_n (1+ 2  b \sqrt{T+1}) + b^2 \pi \int_0^s  x_n(\tau)\,\d \tau.
\end{align*}
By Gr\"{o}nwall's inequality, $\|x_n\|_{L^\infty([0,T+1])}\leq C_T a_n$ with $C_T:=(1+2b\sqrt{T+1})\exp(b^2 \pi (T+1))$. Hence,
$$
\|\nabla v_n\|_{L^\infty(Q_{T+1})} = \|\nabla \tilde v_n\|_{L^\infty(Q_{T+1})} = \|x_n\|_{L^\infty([0,T+1])} \leq C_T' (\|g_n\|_{L^\infty(Q_{T+1})} + \rho \|v_n\|_{L^\infty(Q_{T+1})}).
$$
where $C_T':=2C_TC_{\nu,d} \sqrt{T+1}$.
Now $\nabla v_n=\nabla w_n$ on $Q_T$, so
\begin{align*}
\|\nabla u_n-\nabla \bar u\|_{L^\infty(Q_{T})} &= \|\nabla w_n\|_{L^\infty(Q_{T})}\\
&=\|\nabla v_n\|_{L^\infty(Q_{T})}\\
&\leq \|\nabla v_n\|_{L^\infty(Q_{T+1})}\\
&\leq C_T' (\|g_n\|_{L^\infty(Q_{T+1})} + \rho \|v_n\|_{L^\infty(Q_{T+1})}).
\end{align*}
The right-hand side converges to zero as $n\uparrow\infty$ by Step 2, showing the claim.
\end{proof}

\section{Proof of Lemma \ref{lem:torus-inequality}} \label{app:torus-inquality}

Define $v:=\sqrt{g}>0$. Then
$$
\int_{\T} |(\log g)''|^2g\,\d x
=4\int_{\T}\Big(v''-\frac{(v')^2}{v}\Big)^2\dx.
$$
By periodicity of $v$ and $v'$,
$$
0=\int_{\T}\Big(\frac{(v')^3}{v}\Big)'\dx
=3\int_{\T}\frac{(v')^2v''}{v}\,\dx
-\int_{\T}\frac{(v')^4}{v^2}\,\dx,\qquad \Rightarrow\qquad \int_{\T}\frac{(v')^2v''}{v}\,\d x
=\frac13\int_{\T}\frac{(v')^4}{v^2}\,\d x.
$$
Expanding the square,
$$
\int_{\T} |(\log g)''|^2g\,\dx
=4\int_{\T}|v''|^2\,\d x
+\frac43\int_{\T}\frac{|v'|^4}{v^2}\,\dx
\ge 4\int_{\T}|v''|^2\,\dx.
$$
By periodicity, $\int_\T v'\,\dx=0$, so that by the Poincare inequality on $\T$,
$$
\int_{\T}|v'|^2\,\d x\leq \int_{\T}|v''|^2\,\dx.
$$
Therefore,
$$
\int_{\T} |(\log g)''|^2g\,\dx
\ge 4\int_{\T}|v'|^2\,\d x
=\int_{\T} |(\log g)'|^2g\,\dx,
$$
as claimed.

\section{Proof of Lemma \ref{lem:fisher-info-lower-bound}}
\label{app:fisher-info-lower-bound}

\begin{proof}
Let $X\sim \mu$, set $Y:=(k\cdot X)\,\text{mod} \, 2\pi$ and $\eta=\cL(Y)$. Denote the densities by
$$
m(x):=\frac{\d\mu}{\dx}(x),\qquad \varrho(y):=\frac{\d\eta}{\dy}(y),\qquad x\in\T^d,\\ y\in\T.
$$

\vspace{1em}

\emph{Step 1. $I(\mu)\geq|k|^2 I(\eta)$}.
For any smooth test function $\varphi:\T\mapsto\R$, we compute, using integration by parts and the equality $\varrho'=(\log(\varrho))'\varrho$,
\begin{align*}
\int_\T \varphi'\varrho\,\dy = -\int_\T\varphi\varrho'\,\dy = - \int_\T\varphi (\log(\varrho))'\varrho\,\dy.
\end{align*}
At the same time,
\begin{align*}
\int_\T \varphi'\varrho\,\dy &= \int_{\T^d} \varphi'(k\cdot x)m(x)\,\dx\\
&= \frac{1}{|k|^2}\int_{\T^d} k\cdot \nabla_x (\varphi (k\cdot x)) m(x)\,\dx\\
&= - \frac{1}{|k|^2}\int_{\T^d} \varphi (k\cdot x)\, k\cdot \nabla_x m(x)\,\dx \qquad \text{(integration by parts)}\\
&= - \frac{1}{|k|^2}\int_{\T^d}  \varphi (k\cdot x)\, k\cdot \nabla_x \log (m(x))\, m(x)\,\dx \\
&= - \frac{1}{|k|^2}\E[ \varphi (k\cdot X)\, k\cdot \nabla_x \log (m(X))] \\
&= - \frac{1}{|k|^2}\E[ \varphi (Y) \, \E[ k\cdot \nabla_x \log (m(X))\,|\,Y]] \qquad \text{(law of iterated expectations)}\\
&=- \int_\T \varphi(y) \left(\frac{1}{|k|^2}\, \E[ k\cdot \nabla_x \log (m(X))\,|\,Y=y]\right) \varrho(y)\,\dy.
\end{align*}
Since $\varphi$ was arbitrary, $\log(\varrho)'\varrho=|k|^{-2}\, \E[ k\cdot \nabla_x \log (m(X))\,|\,Y=\cdot ]\varrho$. Then,
\begin{align*}
I(\eta)&= \int_{\T} |(\log(\varrho))'|^2\varrho\,\dy \\
&\leq \frac{1}{|k|^4}\int_{\T} \E[|k\cdot \nabla_x \log(m(X))|^2\,|\,Y=y]\,\varrho(y)\,\dy\qquad \text{(Jensen's inequality)}\\
&=\frac{1}{|k|^4} \int_{\T^d} |k\cdot \nabla_x\log(m(x))|^2\,m(x)\,\dx\\
&\leq \frac{1}{|k|^2 }\int_{\T^d} |\nabla_x\log(m(x))|^2\,m(x)\,\dx \\
&=\frac{1}{|k|^2} I(\mu).
\end{align*}

\vspace{1em}

\emph{Step 2.} We show that $I(\eta)\geq 2|\int e^{iy}\varrho\,\dy|^2$.  Define $(r,\phi)$ by
$$
re^{i\phi} = \int_{\T} e^{iy}\varrho(y)\,\dy,
$$
and note that 
$$
r=\mathrm{Re}\left(\int_\T e^{i(y-\phi)}\varrho(y)\,\dy\right)= \int_\T\cos(y-\phi)\varrho(y)\,\dy,\qquad \text{and}\qquad r^2=\Big|\int_\T e^{iy}\varrho\,\dy\Big|^2.
$$
Introduce a new density $\zeta$ on $\T$ by
$$
\zeta(y) := \frac{e^{2r\cos(y-\phi)}}{Z},\qquad Z=\int_{\T} e^{2r\cos(y-\phi)}\,\dy.
$$
Using the inequality $x\log x\geq x-1$ for $x\ge0$,
$$
\int_\T\log(\varrho)\varrho\,\dy - \int_\T\log(\zeta)\varrho\,\dy= \int_\T \log\left(\frac{\varrho}{\zeta}\right)\varrho\,\dy  \geq \int_\T \left(\frac{\varrho}{\zeta}-1\right)\zeta\,\dy=0. 
$$
Hence,
$$
\int_\T \log(\varrho)\varrho\,\dy\geq\int_\T \log(\zeta)\varrho\,\dy=\int_\T(2r\cos(y-\phi))\varrho(y)\,\dy -  \log Z=2r^2- \log Z\geq r^2-\log(2\pi).
$$
The last inequality used
$$
\frac{1}{2\pi}Z=\frac{1}{2\pi}\sum_{n\in 2\Z_+}  \frac{(2r)^n}{n!}\int_\T\cos(y)^n\,\dy=  \sum_{n\in 2\Z_+}  \frac{(2r)^n}{n!} \frac{n!}{2^n((n/2)!)^2} =  \sum_{n\ge0} \frac{r^{2n}}{(n!)^2}\leq \sum_{n\ge0}\frac{r^{2n}}{n!}=e^{r^2}.
$$
The logarithmic Sobolev inequality on $\T$, see for example {\'E}mery and Yukich \cite{emery1987simple}, reads
$$
\U(f^2\log(f^2)) \leq \U(f^2)  \log(\U(f^2)) + 2 \U(|f'|^2),
$$
where $\U$ is the uniform distribution on $\T$ for a sufficiently smooth function $f:\T\mapsto\R$. Applying this to $f=\sqrt{2\pi\varrho}$ yields
$$
\int_\T \log(2\pi\varrho) \varrho\,\dy \leq \frac12 \int_\T \frac{|\varrho'|^2}{\varrho}\,\dy = \frac12 I(\eta).
$$
Hence,
$$
I(\eta)\geq 2 \int_\T \log(2\pi \varrho)\varrho\,\dy \geq 2r^2.
$$

\vspace{1em}
\emph{Step 3.} Combining the previous steps  we obtain 
$$
I(\mu)\geq |k|^2 I(\eta)\geq 2|k|^2 \ \Big|\int_\T e^{iy}\varrho\,\dy\Big|^2 = 2|k|^2 q_k(\mu).
$$
This establishes
$$
\sup_{\mu\neq\U}\ \frac{2|k|^2 q_k(\mu)}{I(\mu)}\leq 1.
$$

\vspace{1em}
\emph{Step 4.} 
It remains to show that this is sharp. For $\epsilon\in(0,1/4)$, define
$$
m_\eps(x):=\frac{1}{(2\pi)^d}(1+2\eps\cos(k\cdot x)) = \frac{1}{(2\pi)^d}(1+\eps(e^{ik\cdot x}+e^{-ik\cdot x})),\qquad x\in\T^d.
$$
Let $\mu_\eps(\dx)=m_\eps(x)\,\dx$. We see that $m_\eps$ is a positive probability density and compute
$$
q_k(\mu_\eps) = \Big|\int_{\T^d} e^{ik\cdot x}m_\eps(x)\,\d x\Big|^2 = \Big|\frac{1}{(2\pi)^d}\int_{\T^d}[e^{ik\cdot x}+\eps (1+e^{2ik\cdot x})]\,\dx\Big|^2=\eps^2,
$$
using $\int e^{ik\cdot x}\,\dx=0$ for $k\neq 0$. Next, 
$$
\nabla m_\eps(x) = - \frac{2\eps}{(2\pi)^d} \sin(k\cdot x)\, k,
$$
so that
$$
I(\mu_\eps) = \int_{\T^d} \frac{|\nabla m_\eps|^2}{m_\eps}\,\dx = \frac{4\eps^2|k|^2}{(2\pi)^d}\int_{\T^d} \frac{\sin^2(k\cdot x)}{1+2\eps\cos(k\cdot x)}\,\dx.
$$
By bounded convergence,
$$
\frac{I(\mu_\eps)}{2|k|^2\eps^2} = \frac{2}{(2\pi)^d}\int_{\T^d} \frac{\sin^2(k\cdot x)}{1+2\eps\cos(k\cdot x)}\,\dx \ \xrightarrow[\ \ \eps\downarrow0 \ \ ]{}\ \frac{2}{(2\pi)^d}\int_{\T^d} \sin^2(k\cdot x)\,\dx=1,
$$
as claimed.
\end{proof}

{\small
\bibliographystyle{abbrvnat}
\bibliography{references}
}

\end{document}